\def\R{\mathbb R}
\def\Pn{\mathcal P_n}
\def\Sn{\mathcal S_n}
\def\Mb{\mathcal M_n^\beta}
\def\wt{\widetilde}
\def\wh{\widehat}
\def\dis{\operatorname{d}}
\renewcommand{\theta}{\vartheta}
\DeclareMathOperator{\trace}{tr}
\DeclareMathOperator{\sign}{sign}
\DeclareMathOperator{\diag}{diag}
\DeclareMathOperator{\arctantwo}{arctan2}
\def\comment#1{}
\newtheorem{theorem}{Theorem}
\newtheorem{Lemma}[theorem]{Lemma}
\newtheorem{Corollary}[theorem]{Corollary}
\newtheorem{definition}[theorem]{Definition} 
\newtheorem{lemma}[theorem]{Lemma}
\newtheorem{remark}[theorem]{Remark} 
\newtheorem{example}[theorem]{Example}
\journal{Linear Algebra and its Applications}
\begin{document}

\title{Geometries on the cone of positive-definite matrices derived from the power potential and their relation to the power means} 

\author[enitaddress]{Nadia Chouaieb}
\ead{nadia.chouaieb@enit.utm.tn}
\author[dmiaddress]{Bruno Iannazzo}
\ead{bruno.iannazzo@dmi.unipg.it }
\author[enitaddress]{Maher Moakher\corref{mycorrespondingauthor}}
\ead[url]{http://sites.google.com/site/moakhersite/}
\ead{maher.moakher@enit.utm.tn}
\cortext[mycorrespondingauthor]{Corresponding author}

\address[enitaddress]{LAMSIN, National Engineering School of Tunis, University of Tunis El Manar, B.P. 37, 1002 Tunis-Belv\'ed\`ere, Tunisia}
\address[dmiaddress]{Dipartimento di Matematica e Informatica, Universit\`a di Perugia, Via Vanvitelli 1, I-06123 Perugia, Italy}

\begin{abstract}
We study a Riemannian metric on the cone of symmetric positive-definite matrices obtained from the Hessian of the power potential function $(1-\det(X)^\beta)/\beta$. We give explicit expressions for the geodesics and distance function, under suitable conditions. 
In the scalar case, the geodesic between two positive numbers coincides with a weighted power mean, while for matrices of size at least two it yields a notion of weighted power mean different from the ones given in the literature.
As $\beta$ tends to zero, the power potential converges to the logarithmic potential, that yields a well-known metric associated with the matrix geometric mean; we show that the geodesic and the distance associated with the power potential converge to the weighted matrix geometric mean and the distance associated with the logarithmic potential, respectively.
\end{abstract}

\begin{keyword}
     positive-definite matrices
\sep potential function
\sep Riemannian manifold
\sep Riemannian metric
\sep Riemannian distance 
\sep Karcher mean
\sep matrix geometric mean
\sep matrix power mean
\sep Tsallis statistic
\sep $q$-logarithm

\medskip
\MSC[2020]
15B48   % Positive matrices and their generalizations; cones of matrices
\sep
47A64   % Operator means involving linear operators, shorted linear operators, etc.
\sep
53C35   %Differential geometry of symmetric spaces 

\end{keyword}

\maketitle

\section{Introduction} \label{sec:intro}

The importance of the cone of symmetric positive-definite matrices can hardly be exaggerated. Such matrices are omnipresent and play fundamental roles in several disciplines such as abstract mathematics, numerical analysis, probability and statistics and engineering sciences. Nowadays, as many applications deliver data that are constrained to live on this set, it has become essential to understand its geometric structure.

In recent years, a rather large number of metrics have been given on the cone of positive-definite matrices. There are both theoretical and practical reasons for this interest. Theoretically, it has been interesting to understand how to suitable extend notions such as the geometric mean or the power mean from positive scalars to positive-definite matrices
\cite{ll,alm,bhatia,limpalfia}. This generalization has required a profound understanding of the geometry on the cone of positive-definite matrices \cite[Ch. XII]{lang}, \cite[Ch. 6]{bhatia07}. Practically, having a large number of metrics, allows one to choose the one that best fits the problem of averaging or measuring the nearness of data provided by applications. Metrics different from the Euclidean one have been used in several applications that range from medical imaging, to machine learning and engineering, see e.g. \cite{ijp,cheng,lenglet05,hua,drasta,tuzel,barachant12,moakher05,moakher06,fi}.

On the set of symmetric positive-definite matrices of size $n$, that we will denote by $\Pn$, one can use the simple geometry inherited from the Euclidean geometry of symmetric matrices of size $n$, that we will denote by $\Sn$. Indeed, it is known that $\Pn$ is an open convex cone of the space of $\Sn$ and that the tangent space at a matrix $X \in \Pn$ can be identified with $\Sn$. The inner product on $\Sn$
\[
\langle A, B \rangle = \trace(A B),
\]
yields a Euclidean space structure on $\Sn$, which induces on $\Pn$ a natural structure of Riemannian manifold, as an open subset of a Euclidean space. However, the associated metric does not make $\Pn$ a complete metric space.

On the tangent space at $X\in\Pn$, one can also use the affine-invariant metric
\begin{equation} \label{eq:trmetric}
\langle A, B \rangle_X = \trace(X^{-1} A X^{-1} B),
\end{equation}
where $A,B$ are matrices in the tangent space, i.e., symmetric matrices. Endowing $\Pn$ with the metric~\eqref{eq:trmetric} induces on $\Pn$ a structure of Riemannian geometry that we will denote by $\mathcal{M}_n^0$ and $\Pn$ equipped with this metric is a complete metric space \cite{bhatia}.

It has been proved that in $\mathcal{M}_n^0$ any set $\{A_1, \ldots, A_m\}$ of given positive-definite matrices of size $n$ has a unique barycenter, that is a minimum over $\Pn$ of the function
\[
f(X) := \sum_{i=1}^m \delta^2(X, A_i),
\]
where $\delta(A,B)$ is the Riemannian distance between two given matrices $A$ and $B$ in $\mathcal{M}_n^0$. The barycenter on $\mathcal{M}_n^0$ has been understood as the legitimate generalization of the geometric mean to matrices \cite{moakher05b}, and it is often referred to as the Karcher mean.

Moreover, there exists a unique geodesic $\gamma : [0,1] \to \Pn$ joining any two matrices $A, B \in \mathcal{M}_n^0$, and a point of the geodetic curve $\gamma(t)$ is said to be the weighted matrix geometric mean of $A$ and $B$ \cite{lll}.
 
The Riemannian metric~\eqref{eq:trmetric} of $\mathcal{M}_n^0$ can also be viewed as the Hessian of the logarithmic potential function on $\Pn$
\begin{equation} \label{eq:0pot}
\Phi_0(X) = - \ln(\det X).
\end{equation}
In interior-point methods of cone programming, the function~\eqref{eq:0pot} is called the logarithmic barrier function \cite[Sec. 6.3]{nt}. Note that $\ln(\det X) = \trace \ln X$, and hence the negative of the function~\eqref{eq:0pot} is called in classical statistical mechanics the Boltzmann entropy and in information theory it is called the information potential \cite{Hiai2014}.

In this paper, we consider the $\beta$-power potential function on $\Pn$ \cite{ohara,ohara13}
\begin{equation} \label{eq:betapot}
\Phi_\beta(X) = \frac{1 - (\det X)^\beta}{\beta}, \quad \beta \ne 0,
\end{equation}
that generalizes the logarithmic potential in the sense that $\lim_{\beta\to 0} \Phi_\beta(X) = \Phi_0(X)$. For $0 \ne \beta < \frac1n$, the Hessian of $\Phi_\beta(X)$ provides a new family of Riemannian geometries on $\Pn$ that generalizes $\mathcal{M}_n^0$ and will be denoted by $\Mb$.

We will show that the $\beta$-potential function~\eqref{eq:betapot} is intimately related to generalized logarithmic functions. In fact, in his theory of non-extensive statistics mechanics, which is a generalization of the Boltzmann-Gibbs statistical mechanics, Tsallis introduced in 1994 \cite{tsallis} the one-parameter family, parameterized by $q \in \mathbb{R}$, of generalized logarithmic functions 
\[
\ln_q x = \left\{\begin{array}{ll} \ln x,& q=1,\\
\frac{x^{1-q}-1}{1-q},& q\ne 1,\end{array}\right.
\]
which are defined for all $x>0$. The function, $\ln_q$, is called the $q$-logarithm (or the Tsallis logarithm). We note that the family of $q$-logarithms can be seen as the one-parameter family of Box-Cox transformations (with deformation parameter $\lambda = 1 - q$) which was proposed by Box and Cox in 1964 \cite{box-cox}.

By setting $\hat\beta = 1-\beta$, the $\beta$-power potential function~\eqref{eq:betapot} can be compactly written as
\[
\Phi_\beta(X) = - \ln_{\hat\beta}(\det X), \quad  X \in \Pn,
\]
and therefore, the $\beta$-power potential function can also be called the $\hat\beta$-logarithmic potential function on $\Pn$. 

We study the geometry of $\Mb$ and derive an explicit expression for the geodesics and for the Riemannian distance on $\Mb$, under some conditions. In the scalar case, we observe that the geodesics joining the two positive numbers $a$ and $b$ is the weighted power mean $((1-t) a^{\beta/2} + t b^{\beta/2})^{2/\beta}$. This is nice, since it is known that the weighted power mean with parameter $\beta/2$, for a given $t$ converges to the weighted geometric mean $a^{1-t} b^t$ as $\beta \to 0$. For $n>1$, this symmetry disappears and the geodesic on $\Mb$, even for two matrices, does not coincide, for any choice of the parameter, with the definitions of weighted power mean provided so far, namely the straightforward one $((1-t) A^q + t B^q)^{1/q}$, for $t\in[0,1]$, and $q \in \R \setminus \{0\}$, and the one provided by Lim and P\'alfia~\cite{limpalfia}. Thus, the barycenter in $\Mb$ can be seen as a third definition of power mean. The latter has the luxury of being derived from a rich geometric structure on $\Pn$ and it approaches the weighted geometric mean $A \#_t B = A(A^{-1} B)^t$ when the parameter $\beta$ goes to 0.

We here recall that for a diagonalizable matrix $C \in \R^{n\times n}$ with positive eigenvalues, that is, such that there exist an invertible matrix $M$ and a diagonal matrix $D = \diag(d_1, \ldots, d_n)$ such that $C = M^{-1} D M$, $C^q$ should be understood as the primary matrix function $C^q = M^{-1} \diag(d_1^q, \ldots, d_n^q)M$. A remarkable property of this function, that will be useful in the following is the commutativity with similarities, i.e., if $N$ is invertible, then $N C^q N^{-1} = (N C N^{-1})^q$.

The paper is structured as follows: in Section~\ref{sec:bpow} we introduce the metric related to the $\beta$-power potential, and study some of its properties; in Section~\ref{sec:geo} we provide explicit expressions for the geodesic joining two positive-definite matrices in the scalar, linearly dependent, and general cases; a number of properties of the mean of two positive-definite matrices associated with this metric as well as different asymptotic results are studied in Section~\ref{sec:mean}. Explicit expression of the Riemannian distance between two positive-definite matrices is derived in Section~\ref{sec:dist}, that, as $\beta$ goes to 0, is shown to converge to the Riemannian distance associated with the metric \eqref{eq:trmetric}. Finally, in Section~\ref{sec:conc} some conclusions are drawn.

\section{A geometry on $\Pn$ related to the $\beta$-power potential} \label{sec:bpow}

The logarithmic potential function $\Phi_0(X) = - \ln(\det(X))$ on $\Pn$ is strictly related to the matrix geometric mean. Indeed, the differential of $\Phi_0(X)$ is 
\[
d \Phi_0(X) = - \trace(X^{-1} dX),
\]
and the second differential of $\Phi_0(X)$ is 
\begin{equation}\label{eq:qf1}
d^2 \Phi_0(X) = \trace(X^{-1} dX X^{-1} dX),
\end{equation}
which is a positive-definite quadratic form, whose associated metric, is the one defined in~\eqref{eq:trmetric}. The barycenter with respect to~\eqref{eq:trmetric} is said to be the matrix geometric mean or the Karcher mean.

The one-parameter family of power potential functions~\eqref{eq:betapot}, which is a generalization of the logarithmic potential in the sense that $\lim_{\beta\to 0} \Phi_\beta(X)=\Phi_0(X)$, provides for all values of $\beta \in (- \infty, 0) \cup (0, \frac1n)$ a family of Riemannian metrics that are thus very natural to study.

Indeed, since 
\[
d \Phi_\beta(X) = - (\det X)^\beta \trace(X^{-1} dX),
\]
we have that the second differential of $\Phi_\beta(X)$ is 
\begin{equation} \label{eq:qf2}
d^2 \Phi_\beta(X) = (\det X)^\beta \left( \trace(X^{-1} dX X^{-1} dX) - \beta \trace^2(X^{-1} dX) \right),
\end{equation}
where we have used $d\det(X) = \det(X) \trace(X^{-1}dX)$ and $d(X^{-1}) = - X^{-1}dX X^{-1}$.

The quadratic form~\eqref{eq:qf2} is positive definite as long as $\beta < \frac{1}{n}$. To prove this fact, we can write~\eqref{eq:qf2} as
\begin{equation*}
d^2 \Phi_\beta(X) = (\det X)^\beta \bigl( \trace((X^{-1/2} dX X^{-1/2})^2) - \beta \trace^2(X^{-1/2} dX X^{-1/2}) \bigr),
\end{equation*}
and use the following.
\begin{lemma} \label{lem:posdef}
For any $n \times n$ symmetric matrix $S \ne 0$ and $\beta < \frac1n$ we have
\[
\trace(S^2)  - \beta \trace^2(S) > 0.
\]
\end{lemma}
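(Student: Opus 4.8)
The plan is to reduce the matrix inequality to a statement about eigenvalues. Since $S$ is symmetric, it has real eigenvalues $\lambda_1, \ldots, \lambda_n$, and the two trace quantities become $\trace(S^2) = \sum_{i=1}^n \lambda_i^2$ and $\trace(S) = \sum_{i=1}^n \lambda_i$. The claim is then equivalent to the scalar inequality $\sum_i \lambda_i^2 - \beta (\sum_i \lambda_i)^2 > 0$, to be proved whenever the $\lambda_i$ are not all zero and $\beta < \frac1n$.

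The key step is the Cauchy-Schwarz inequality in the form $(\sum_i \lambda_i)^2 \le n \sum_i \lambda_i^2$, that is, $\trace^2(S) \le n\,\trace(S^2)$. A clean way to package this, while obtaining strictness at the same time and covering all admissible $\beta$ uniformly, is to split $S$ orthogonally as $S = \frac{\trace(S)}{n} I + S_0$, where $S_0$ is the traceless part. A direct computation gives $\trace(S^2) = \frac1n \trace^2(S) + \trace(S_0^2)$, so that $\trace(S^2) - \beta\,\trace^2(S) = (\frac1n - \beta)\,\trace^2(S) + \trace(S_0^2)$. From here the conclusion is immediate: since $\beta < \frac1n$ the coefficient $\frac1n - \beta$ is strictly positive, while $\trace^2(S) \ge 0$ and $\trace(S_0^2) \ge 0$ because $S_0$ is symmetric. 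Hence the right-hand side is nonnegative, and it vanishes only when $\trace(S) = 0$ and $S_0 = 0$ simultaneously, which forces $S = 0$. As we assume $S \ne 0$, at least one of the two nonnegative terms is strictly positive, and the inequality is strict.

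I do not expect a genuine obstacle here: the only point requiring a little care is handling all $\beta < \frac1n$ at once, including negative values (for which the term $-\beta\,\trace^2(S)$ is already nonnegative on its own) and the borderline regime $\trace(S) = 0$. The orthogonal decomposition deals with both transparently, since it isolates the dependence on $\beta$ into the single coefficient $\frac1n - \beta$ and relegates strictness to the elementary fact that a nonzero symmetric matrix has $\trace(S_0^2) > 0$ or $\trace^2(S) > 0$.
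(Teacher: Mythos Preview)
Your proof is correct and rests on the same inequality as the paper's, namely Cauchy--Schwarz in the form $\trace^2(S)=\langle S,I\rangle^2\le n\,\trace(S^2)$. Your orthogonal decomposition $S=\tfrac{\trace(S)}{n}I+S_0$ is just the Pythagorean identity behind that inequality; it has the minor advantage of handling all $\beta<\tfrac1n$ in one stroke, whereas the paper's one-line chain $\beta\,\trace^2(S)\le \beta n\,\trace(S^2)<\trace(S^2)$ tacitly assumes $\beta>0$ (the case $\beta\le 0$ being immediate).
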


\begin{proof}
This follows from the Cauchy-Schwartz inequality:
\[
\beta\trace^2 (S) = \beta\langle S, I \rangle^2 \le \beta\langle I, I \rangle \langle S, S \rangle = \beta n\trace (S^2) < \trace(S^2).
\]
\end{proof}

The Riemannian metric associated with \eqref{eq:qf2} at the base point $X \in \Pn$ is given by
\begin{equation} \label{eq:betametric}
g_{X}^{\beta}(A, B) :=  (\det X^\beta) \bigl( \trace(X^{-1} A X^{-1} B) - \beta \trace(X^{-1} A) \trace(X^{-1} B) \bigr), 
\end{equation}
where $A$ and $B$ are points of the tangent space to $\Pn$ at $X$, identified as usual with $\Sn$.

We will denote by $\Mb$ the Riemannian manifold obtained by endowing $\Pn$ with the metric~\eqref{eq:betametric}. In Section \ref{sec:geo}, an explicit expression for the geodesics of $\Mb$ will be provided, but first we describe a wide set of isometries of $\Mb$.

\begin{Lemma}\label{thm:lem1v}
Let $M \in GL(n,\R)$ be such that $\det(M) = \pm 1$. The function $f : \Pn \to \Pn$ such that $f(X) = M X M^T$ is an isometry of $\Pn$ endowed with the metric~\eqref{eq:betametric}.
\end{Lemma}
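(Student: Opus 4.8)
The plan is to verify the definition of a Riemannian isometry directly: I must show that $f$ is a diffeomorphism of $\Pn$ and that its differential pulls back the metric, i.e.\ that $g^\beta_{f(X)}(df_X A, df_X B) = g^\beta_X(A,B)$ for every $X \in \Pn$ and every pair $A, B \in \Sn$. That $f$ is a diffeomorphism is immediate: since $M$ is invertible, $MXM^T \in \Pn$ whenever $X \in \Pn$, and $f$ is linear with smooth inverse $Y \mapsto M^{-1} Y M^{-T}$. Being the restriction of a linear map, $f$ coincides with its own differential, so $df_X(A) = MAM^T$, and this lands in $\Sn = T_{f(X)}\Pn$ because $MAM^T$ is symmetric whenever $A$ is.

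The core of the argument is then a term-by-term comparison in the expression \eqref{eq:betametric} evaluated at $Y := f(X) = MXM^T$ with tangent vectors $\widetilde A := MAM^T$ and $\widetilde B := MBM^T$. First I would record $Y^{-1} = M^{-T} X^{-1} M^{-1}$, whence $Y^{-1}\widetilde A = M^{-T} X^{-1} A M^T$ and similarly for $\widetilde B$. Substituting and repeatedly invoking the cyclic invariance of the trace together with $M^T M^{-T} = I$ collapses the outer factors, giving $\trace(Y^{-1}\widetilde A\, Y^{-1}\widetilde B) = \trace(X^{-1} A X^{-1} B)$ and $\trace(Y^{-1}\widetilde A) = \trace(X^{-1} A)$ (and likewise for $B$). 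Thus both the quadratic trace term and the product of the two linear trace terms are invariant.

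It remains to treat the scalar prefactor $(\det X)^\beta$, and this is the only place where the hypothesis enters: from $\det Y = \det(MXM^T) = (\det M)^2 \det X$, the assumption $\det M = \pm 1$ gives $(\det M)^2 = 1$ and hence $(\det Y)^\beta = (\det X)^\beta$. Combining the three invariances yields $g^\beta_{f(X)}(\widetilde A, \widetilde B) = g^\beta_X(A, B)$, as required. There is no genuine obstacle here; the only point demanding care is the bookkeeping of the transposes $M^T$ and $M^{-T}$ in the cyclic reductions. The conceptual observation worth emphasizing is that the two trace terms are congruence-invariant for \emph{any} invertible $M$, so the condition $\det M = \pm 1$ is precisely—and solely—what is needed to preserve the determinantal factor and thereby the full metric.
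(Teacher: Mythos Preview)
Your proof is correct and follows essentially the same approach as the paper: compute the differential of the linear map $f$, substitute into the metric, and observe that the trace terms are congruence-invariant while the determinantal prefactor picks up a factor $(\det M)^{2\beta}$, which equals $1$ under the hypothesis. The paper's version is more terse---it condenses your three invariance checks into the single line $g_{MXM^T}^\beta(MAM^T,MBM^T)=(\det(M)^2)^\beta g_X^\beta(A,B)$---but the content is identical.
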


\begin{proof}
Since $f$ is linear, we have that $Df(X)[H] = M H M^T$ for any positive-definite matrix $X$ and symmetric matrix $H$. Hence,
\[
g_{f(X)}^\beta(Df(X)[A], Df(X)[B]) = g_{MXM^T}^\beta(MAM^T, MBM^T) = (\det(M)^2)^\beta g_X^\beta(A, B),
\]
for any $A,B\in\Sn$, and this completes the proof.
\end{proof}

For the reader's convenience and in order to keep the paper self-contained, we give here the definition of a totally geodesic submanifold. Then, we recall a well-known result in Riemannian geometry that characterizes totally-geodesic submanifolds.  
\begin{definition}[\cite{chen2000, helgason1978}] 
A submanifold $N$ of a Riemannian manifold $(M, g)$ is called totally geodesic if any geodesic on the submanifold $N$ with its induced Riemannian metric is also a geodesic on the Riemannian manifold $(M, g)$.  
\end{definition}

\begin{theorem}[Thm. 5.1 in \cite{kobayashi}]\label{thm:kobayashi}
Let $\mathcal{M}$ be a Riemannian manifold and $\mathfrak S$ any set of isometries of $\mathcal{M}$. Let $F$ be the set of points of $\mathcal{M}$ which are left fixed by any elements of $\mathfrak S$. Then each connected component of $F$ is a closed totally-geodesic submanifold of $\mathcal{M}$.
\end{theorem}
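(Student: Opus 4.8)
The plan is to derive all three assertions—closedness, the submanifold structure, and the totally-geodesic property—from the single structural fact that every isometry commutes with the exponential map: if $\phi$ is an isometry of $\mathcal{M}$ then $\phi \circ \exp_p = \exp_{\phi(p)} \circ \, d\phi_p$. This identity holds because isometries preserve the Levi-Civita connection and therefore carry geodesics to geodesics, so $\phi \circ \gamma_v$ is the geodesic issuing from $\phi(p)$ with velocity $d\phi_p(v)$.

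First I would dispose of closedness, which is the easy part. For a single $\phi \in \mathfrak{S}$ the set $\mathrm{Fix}(\phi) = \{p : \phi(p) = p\}$ is closed by continuity of $\phi$, and $F = \bigcap_{\phi \in \mathfrak{S}} \mathrm{Fix}(\phi)$ is an intersection of closed sets, hence closed; each connected component of a closed set is closed as well.

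Next, the submanifold structure, which is the crux. Fix $p \in F$. Since each $\phi \in \mathfrak{S}$ fixes $p$, its differential $d\phi_p$ is a linear isometry of $T_p\mathcal{M}$, and I would set $V_p := \{v \in T_p\mathcal{M} : d\phi_p(v) = v \text{ for all } \phi \in \mathfrak{S}\}$, the common fixed subspace. Choosing a normal ball $B$ about $0 \in T_p\mathcal{M}$ on which $\exp_p$ is a diffeomorphism, I claim that $F$ coincides near $p$ with $\exp_p(V_p \cap B)$. Indeed, if $v \in V_p$ then $\phi(\exp_p v) = \exp_p(d\phi_p v) = \exp_p v$ for every $\phi$, giving $\exp_p(V_p \cap B) \subseteq F$; conversely, if $q = \exp_p(v) \in F$ with $v \in B$, then $\exp_p(d\phi_p v) = \phi(q) = q = \exp_p(v)$, and injectivity of $\exp_p$ on $B$ forces $d\phi_p v = v$, so $v \in V_p$. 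Thus $F$ is locally the diffeomorphic image of a linear subspace, hence a smooth submanifold with $T_pF = V_p$. Repeating the analysis at nearby $q \in F$ shows $T_qF = V_q$ has the same dimension $\dim V_p$, so the dimension is locally constant and each connected component is an embedded submanifold of fixed dimension.

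Finally, the totally-geodesic property. By the definition recalled above I must check that every geodesic of $F$ in its induced metric is a geodesic of $\mathcal{M}$. Fix $p \in F$ and $v \in T_pF = V_p$; the $\mathcal{M}$-geodesic $\gamma(t) = \exp_p(tv)$ satisfies $\phi(\gamma(t)) = \exp_p(t\, d\phi_p v) = \gamma(t)$ for all $\phi$, hence $\gamma$ lies entirely in $F$. A curve contained in $F$ that is a geodesic of $\mathcal{M}$ has vanishing acceleration, so it is \emph{a fortiori} a geodesic of the induced metric; thus $\gamma$ is the $F$-geodesic with initial data $(p,v)$, and by uniqueness every induced-metric geodesic arises this way and is therefore a geodesic of $\mathcal{M}$. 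The main obstacle is the submanifold step: one must verify that the linear model $\exp_p(V_p \cap B)$ is \emph{all} of $F$ near $p$, not merely contained in it, and that $\dim V_q$ cannot jump within a component—both of which are settled by the injectivity argument above, after which the totally-geodesic conclusion is almost immediate.
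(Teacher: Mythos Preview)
The paper does not supply a proof of this theorem: it is quoted verbatim as Theorem~5.1 of Kobayashi's \emph{Transformation Groups in Differential Geometry} and then applied. Your argument is essentially the classical proof found in that reference (and in Helgason), built on the identity $\phi\circ\exp_p=\exp_{\phi(p)}\circ d\phi_p$ for isometries, and it is correct. There is therefore nothing in the paper to compare against; your write-up simply fills in what the authors chose to cite.
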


This allows us to identify two totally-geodesic submanifolds of $\Mb$ that are useful in the following.

\begin{Corollary}\label{thm:tot}
The set of positive-definite diagonal matrices of size $n$ and the set $\{\alpha I\,:\,\alpha>0\}$ of positive scalar matrices of size $n$  are totally-geodesic submanifolds of the Riemannian manifold $\Pn$ endowed with the metric~\eqref{eq:betametric}.
\end{Corollary}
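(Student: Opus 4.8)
The plan is to realize both sets as fixed-point sets of suitable families of isometries of $\Mb$ and then invoke Theorem~\ref{thm:kobayashi}. Every isometry I will use is of the form $f(X) = MXM^T$ furnished by Lemma~\ref{thm:lem1v}, so the only requirement to check at each step is that the chosen $M \in GL(n,\R)$ satisfies $\det(M) = \pm 1$.

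For the positive-definite diagonal matrices, I would take, for each $k \in \{1,\dots,n\}$, the sign-change matrix $D_k = \diag(1,\dots,1,-1,1,\dots,1)$ with the $-1$ in position $k$. Since $D_k$ is symmetric with $\det(D_k) = -1$, the map $X \mapsto D_k X D_k$ is an isometry of $\Mb$. A direct computation shows that this map negates exactly the entries $X_{ij}$ for which precisely one of the indices $i,j$ equals $k$ and leaves all others fixed; hence $X$ is invariant precisely when every off-diagonal entry lying in the $k$-th row or column vanishes. Taking the whole family $\{X \mapsto D_k X D_k : 1 \le k \le n\}$, the common fixed set inside $\Pn$ is then exactly the set of positive-definite diagonal matrices.

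For the scalar matrices, I would enlarge this family by adjoining the conjugations $X \mapsto P X P^T$ by permutation matrices $P$, which satisfy $P^T = P^{-1}$ and $\det(P) = \pm 1$, so they too are isometries. Being fixed by all the $D_k$ already forces $X$ to be diagonal; for a diagonal $X = \diag(x_1,\dots,x_n)$, conjugation by $P$ permutes the diagonal entries according to the underlying permutation, so requiring invariance under all permutations (it suffices to use the transpositions) forces $x_1 = \cdots = x_n$. Thus the common fixed set is exactly $\{\alpha I : \alpha > 0\}$.

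Finally, Theorem~\ref{thm:kobayashi} asserts that each \emph{connected component} of a fixed set is a closed totally-geodesic submanifold, so the remaining point to verify is connectedness of the two fixed sets. This is immediate: the positive-definite diagonal matrices form the convex set $(0,\infty)^n$ and the positive scalar matrices form the convex half-line $\{\alpha I : \alpha > 0\}$, each therefore coinciding with its single connected component. The only place demanding genuine care is the assertion that each common fixed set is \emph{exactly} the claimed submanifold --- that the chosen isometries fix nothing more --- but this reduces to the elementary entrywise bookkeeping sketched above, so I do not anticipate any real obstacle.
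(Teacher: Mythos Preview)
Your proposal is correct and follows essentially the same route as the paper: the paper uses exactly the sign-change matrices $D_k$ to cut down to diagonal matrices, and then adjoins the transpositions $(1,i)$ (rather than the full symmetric group) to force the diagonal entries to coincide. Your version is slightly more careful in explicitly checking connectedness of the fixed sets, which the paper leaves implicit.
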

\begin{proof}
Let $M_i\in\R^{n\times n}$ be the diagonal matrix such that $(M_i)_{ii}=-1$ and $(M_i)_{jj}=1$, for $j\ne i$. By Lemma \ref{thm:lem1v}, the map $X\to M_iXM_i^T$ is an isometry of $\Mb$, and its fixed points are the positive-definite matrices whose $i$-th row and column are $0$ except in the position $i$. 

The common fixed points of all the isometries of the set $\mathfrak S=\{M_1,\ldots,M_n\}$ are the positive diagonal matrices, and by Theorem \ref{thm:kobayashi} they form a totally-geodesic submanifold of $\Mb$.

Now consider, for $i=2,\ldots,n$, the matrix $N_i\in\R^{n\times n}$ that permutes the components $1$ and $i$ of a vector in $\R^n$. We have that $(N_i)_{h\ell}=1$ for (a) $h=\ell$, with $h\not\in\{1,i\}$; (b) $h=1$, $\ell=i$ and; (c) $h=i$, $\ell=1$; and $(N_i)_{h\ell}=0$ elsewhere. A fixed point $X$ of the isometry $X\to N_iXN_i^T$ is such that $X_{11}=X_{ii}$. 

The fixed points common to the isometries in the set $\mathfrak T=\{M_1,\ldots,M_n,N_2,\ldots,N_n\}$ are the diagonal matrices with constant diagonal, namely the positive scalar matrices, that by the aforementioned theorem form a totally-geodesic submanifold of $\Mb$.
\end{proof}

\section{Geodesics of $\Mb$} \label{sec:geo}

We show that there exists a unique geodesic curve for the metric~\eqref{eq:qf2} joining two positive-definite matrices $A$ and $B$, under some conditions on $\beta$. Moreover, we provide an explicit expression for the geodesic,  by solving analytically the differential equation that it satisfies.

In the scalar case, the geodesic curve can be interpreted as the weighted $p$-power mean, for $p=\beta/2$, of two positive numbers $a,b$, namely $((1-t)a^p+tb^p)^{1/p}$, where $t\in[0,1]$ is the weight and $p$ is a nonzero real number.
This is a very nice feature, since the geodesics for the metric~\eqref{eq:qf1}, corresponding to the logarithmic potential is the weighted matrix geometric mean $a^{1-t}b^t$, that, for a given $t$, is the limit for $p\to 0$ of the $p$-power mean.

We show that a similar result holds when $A$ and $B$ are two linearly dependent matrices, where the geodesics turn out to be $((1-t)A^p+tB^p)^{1/p}$, for $p=n\beta/2$. For linearly independent $A$ and $B$, the explicit expression is much more complicated.

\subsection{Geodesic equation}

Let $\mathscr{C}_{A,B}$ denote the space of all $C^2$-curves from the interval $[0, 1]$ to $\Pn$ that join two points $A$ and $B$ in $\Pn$, i.e.,
\[
\mathscr{C}_{A,B} := \left\{ P : [0, 1] \rightarrow \Pn, P \in C^2([0, 1]) \ | \ P(0) = A, \ P(1) = B \right\}.
\]
A geodesic joining $A$ and $B$ in $\Mb$ is a curve in $\mathscr{C}_{A,B}$ that minimizes the length with respect to the metric $\Mb$, that is, the functional 
\begin{equation}\label{eq:length}
\mathscr{L}^\beta(P) := \int_0^1 \sqrt{g_{P(t)}^\beta(P'(t),P'(t))} \, dt.
\end{equation}

In the next result we derive a differential equation whose solution provides the geodesics.

\begin{theorem}
Let $P : [0, 1] \rightarrow \Pn$ be a smooth geodesic on $\Pn$ equipped with the Riemannian metric~\eqref{eq:betametric}. Then the function $G(t) = P^{-1}(t) {P}'(t)$ satisfies the differential equation
\begin{equation} \label{eq:GeodEq}
G' = \frac{\beta}{2(1 - n \beta)} \left( \trace(G^2) - \beta \trace^2(G) \right) I - \beta \trace(G) G.
\end{equation}
\end{theorem}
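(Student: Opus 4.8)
The plan is to characterize geodesics as critical points of a variational functional and then read off the Euler--Lagrange equation in the variable $G = P^{-1}P'$. Since, among curves joining $A$ and $B$, the constant-speed minimizers of the length $\mathscr L^\beta$ are exactly the minimizers of the energy $\mathscr E^\beta(P) = \int_0^1 g^\beta_{P(t)}(P'(t),P'(t))\,dt$ (equality in $(\mathscr L^\beta)^2 \le \mathscr E^\beta$ forces constant speed), I would work with $\mathscr E^\beta$ rather than with $\mathscr L^\beta$ directly, which removes the square root; at the very end one checks that the resulting equation indeed makes $g^\beta_{P}(P',P')$ constant, legitimizing this reduction. Writing $L = (\det P)^\beta\bigl(\trace(G^2) - \beta\,\trace^2(G)\bigr)$ with $G = P^{-1}P'$ and $s = \trace(G)$, I would take a variation $P \mapsto P + \epsilon H$ with $H$ symmetric and $H(0) = H(1) = 0$ and compute $\frac{d}{d\epsilon}\big|_{\epsilon = 0}\mathscr E^\beta$.

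The computation rests on three elementary identities: $\delta(\det P)^\beta = \beta(\det P)^\beta\trace(P^{-1}H)$, $\delta(P^{-1}) = -P^{-1}HP^{-1}$, and $\delta P' = H'$. Substituting these and repeatedly using the cyclicity of the trace together with the symmetry of $P$, $P^{-1}$, $P'$ collects the first variation into a sum of terms linear in $H$ and terms linear in $H'$. I would then integrate the $H'$ terms by parts; the boundary contributions vanish because $H(0) = H(1) = 0$, leaving $\delta\mathscr E^\beta = \int_0^1 \trace(\Xi\,H)\,dt$ for an explicit matrix $\Xi$ built from $P^{-1}$, $P'$, $P''$ and the scalars $s$ and $s' = \trace(G')$. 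Since every term of $\Xi$ is symmetric and $H$ ranges over all symmetric matrices vanishing at the endpoints, the stationarity condition reduces to $\Xi = 0$.

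The remaining work is algebraic simplification. I would first use $\rho' = \beta s\rho$ with $\rho = (\det P)^\beta$ to expand the $\frac{d}{dt}$ occurring in $\Xi$, and observe that after this expansion each term carries a factor $P^{-1}$ on both its far left and far right; multiplying the identity $\Xi = 0$ on both the left and the right by $P$ therefore clears every $P^{-1}$ and leaves an equation expressed through $P''$, $P'P^{-1}P'$ and $P$. Converting to $G$ via $P' = PG$, $P'' = P(G' + G^2)$ and $P'P^{-1}P' = PG^2$ makes the second-order terms collapse, since $P'' - P'P^{-1}P' = PG'$, and after a final multiplication by $P^{-1}$ one is left with an equation of the shape $G' = -\beta\,\trace(G)\,G + c\,I$, where $c = \beta^2 s^2 + \beta s' + \tfrac{\beta}{2}\bigl(\trace(G^2) - \beta\,\trace^2(G)\bigr)$ still involves the unknown scalar $s'$. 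The decisive step is to take the trace of this very equation: using $\trace(I) = n$ gives $s' = -\beta s^2 + nc$, and substituting back yields the single scalar relation $c(1 - n\beta) = \tfrac{\beta}{2}\bigl(\trace(G^2) - \beta\,\trace^2(G)\bigr)$. Solving for $c$ produces the factor $\tfrac{1}{1 - n\beta}$ and delivers exactly~\eqref{eq:GeodEq}.

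I expect the main obstacle to be the bookkeeping in the first variation: several trace terms must be matched by transposition and cyclicity before the $H$- and $H'$-parts can be separated cleanly, and care is needed not to drop the pure-trace contributions (those coming from the determinant factor and from the $-\beta\,\trace^2$ term of the metric), since it is precisely these contributions that, through the trace argument above, produce the $1 - n\beta$ denominator. Finally, verifying a posteriori that $g^\beta_{P}(P',P')$ is constant along solutions---done by differentiating it and using the derived ODE together with $c(1-n\beta) = \tfrac{\beta}{2}\bigl(\trace(G^2)-\beta\,\trace^2(G)\bigr)$---closes the gap between the energy and length formulations and confirms that the solutions are genuine (constant-speed) geodesics.
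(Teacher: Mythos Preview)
Your proposal is correct and follows essentially the same route as the paper: both pass from length to energy, derive the Euler--Lagrange condition for the Lagrangian $L = (\det P)^\beta\bigl(\trace(G^2)-\beta\trace^2(G)\bigr)$, arrive at an equation of the form $G' - \beta\trace(G')I = -\beta\trace(G)G + \tfrac{\beta}{2}\bigl(\trace(G^2)+\beta\trace^2(G)\bigr)I$ (equivalently your $G' = -\beta sG + cI$ with $c$ still containing $s'$), and then take the trace to eliminate $\trace(G')$ and produce the $1/(1-n\beta)$ factor. The only cosmetic difference is that the paper writes down $\partial L/\partial P$ and $\partial L/\partial P'$ in the classical Lagrangian formalism, while you compute the first variation by hand via $P\mapsto P+\epsilon H$ and an integration by parts; the two computations are equivalent, and your added a posteriori check that $g^\beta_P(P',P')$ is constant is a harmless extra.
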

\begin{proof} 
It is a known fact in differential geometry (see e.g., \cite[p.~17]{jost}) that the extremal curves for the length functional $\mathscr{L}^\beta$ coincide with the extremal curves for the energy functional defined by 
\[
\mathscr{E}^\beta(P) := \frac{1}{2}\int_0^1 g_{P(t)}^\beta(P'(t),P'(t)) \, dt. 
\]
A customary technique to find the critical curves of $\mathscr{E}^{\beta}$, is to solve the Euler-Lagrange equation associated with $\mathscr{E}^{\beta}$, that is
\[
\frac{d}{dt} \frac{\partial L}{\partial P'} - \frac{\partial L}{\partial P} = 0,
\]
where $L(P, P') := \tfrac{1}{2} g_{P(t)}^\beta(P'(t),P'(t))$ is the ``Lagrangian'' associated with the energy functional $\mathscr{E}^\beta$. 

Direct computation gives
\begin{align*}
\frac{\partial L}{\partial P} &= (\det P)^\beta \left[ \frac{\beta}2 \left(\trace(G^2) - \beta \trace^2(G) \right) I - G^2 + \beta \trace(G) G \right] P^{-1}, \\
\intertext{and}
\frac{\partial L}{\partial P'} &= (\det P)^\beta \left[ G - \beta \trace(G) I \right] P^{-1},
\end{align*}
where we have set $G = P^{-1} P'$.
Taking the derivative of the previous equation with respect to $t$ and after some work we obtain
\begin{equation}\label{eq:ode-G}
G'-\beta \trace(G')I = -\beta \trace(G)G+\frac{1}{2}\beta^2\trace^2(G)I + \frac{1}{2}\beta\trace(G^2)I.
\end{equation} 
By applying the trace to both sides of equation~\eqref{eq:ode-G}, we get
\begin{equation*} 
\trace(G') = \frac{\beta}{2(1 - n\beta)} \left( 
n\trace(G^2) + (\beta n-2) \trace^2(G) \right),
\end{equation*}
which, for later convenience, we rewrite as
\begin{equation} \label{eq:ode-trG} 
\trace(G') = \frac{n\beta}{2(1 - n\beta)} \left( 
\trace(G^2) - \beta \trace^2(G) \right) - \beta \trace^2(G).
\end{equation}
Substituting \eqref{eq:ode-trG} into \eqref{eq:ode-G}, after some simple manipulations, yields \eqref{eq:GeodEq}.
\end{proof}

To find geodesic curves one can solve equation \eqref{eq:GeodEq}. To this end, we found it useful to decompose $G(t)$ into its isotropic part $\alpha(t) I$, where $\alpha(t)=\frac{1}{n}\trace(G(t))$, and its deviatoric part $\wt{G}(t) = G(t) - \alpha(t) I$ (recall that this decomposition is unique for any given symmetric matrix, see e.g. \cite{augusti}). We then have $\trace G(t) = n \alpha(t)$ and  $\trace \wt{G}(t) = 0$ and  equation \eqref{eq:GeodEq} can be written as
\begin{equation} \label{eq:GeodEq2}
\left\{\begin{array}{l}
\alpha' = - n \beta \left( \frac{1}{2} \alpha^2 - \frac{1}{2n(1 - n \beta)} \trace(\wt{G}^2) \right), \\
\wt{G}' =  - n \beta \alpha \wt{G}.
\end{array}\right.
\end{equation}
We will get a closed form solution of equation~\eqref{eq:GeodEq2} when $G(t)$ is a diagonal matrix. In Section \ref{sec:diag} we will show that this is not a restriction.

\subsection{Reduction to diagonal matrices}\label{sec:diag}

Let $P:[0, 1]\to \Pn$ be a smooth curve, such that $P(0)=A$ and $P(1)=B$, for $A,B\in\Pn$. The length of $P$ in $\Mb$ is defined in~\eqref{eq:length}.

There exists $M\in GL(n)$ such that $M^TAM=I$ and $M^TBM=D$ is a positive definite diagonal matrix (one can choose $M=A^{-1/2}U$, where $U$ is an orthogonal matrix such that $D:=U^TA^{-1/2}BA^{-1/2}U$ is diagonal).

If we set $Q(t)=M^TP(t) M$, then  we have
\[
\begin{split}
& \trace(Q(t)^{-1}Q'(t)Q(t)^{-1}Q'(t))=\trace(P(t)^{-1}P'(t)P(t)^{-1}P'(t)),\\
&
\trace^2(Q(t)^{-1}Q'(t))=\trace^2(P(t)^{-1}P'(t)), 
\end{split}
\]
and $\det(Q(t))^\beta=|\det(M)|^{2\beta}\det(P(t))^\beta$, from which we obtain
\[
g_{Q(t)}^\beta(Q'(t),Q'(t))
=|\det(M)|^{2\beta}g_{Q(t)}^\beta(P'(t),P'(t)),
\]
and thus 
\[
\mathcal L(Q(t))=|\det(M)|^\beta\mathcal L(P(t)).
\]
The constant $|\det(M)|^\beta = \det(A)^{-\beta/2}$ does not depend on $P(t)$, but just on $A$, thus, for any smooth curve $P(t)$ joining $A$ and $B$ there is a smooth curve joining $I$ and $D$ whose length is a scalar multiple of the length of $P(t)$, and also the converse is true. 

This shows that, without loss of generality, in order to find the geodesics, we can consider only curves joining the identity and a diagonal matrix with positive diagonal entries.

\subsection{Special cases}\label{sec:special}

Before giving the general form of the geodesic, we consider the case in which the endpoints are both multiples of the same matrix or, equivalently, are linearly dependent. This includes, in particular, the case $n=1$. For these matrices, the geodesic has a simple form and it is related to the scalar weighted power mean.

\begin{lemma}\label{thm:lem7}
Let $A\in\Pn$. A ray $\{ rA, \ r > 0 \}$ is a totally-geodesic submanifold of $\Mb$ and the geodesic joining $r_0 A$ and $r_1 A$ is given by
\begin{equation}\label{eq:i}
P(t) = \bigl( (1 - t) r_0^{\frac{n\beta}2} + t r_1^{\frac{n\beta}2} \bigr)^{\frac2{n\beta}} A,\qquad t\in[0,1].
\end{equation}
\end{lemma}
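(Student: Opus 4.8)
The plan is to settle the two assertions by exploiting that every congruence is a homothety of $\Mb$. As the computation in the proof of Lemma~\ref{thm:lem1v} shows --- without actually using $\det M=\pm1$ --- for any $M\in GL(n,\R)$ the map $\phi_M(X)=MXM^T$ satisfies $g^\beta_{\phi_M(X)}(D\phi_M(X)[A],D\phi_M(X)[B])=(\det M)^{2\beta}\,g^\beta_X(A,B)$, i.e.\ $\phi_M$ rescales the metric by the positive constant $(\det M)^{2\beta}$. Multiplying a Riemannian metric by a constant leaves its Levi--Civita connection, and hence its geodesics and totally-geodesic submanifolds, unchanged; therefore $\phi_M$ maps geodesics to geodesics and totally-geodesic submanifolds to totally-geodesic submanifolds. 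Choosing $M=A^{-1/2}$ we get $\phi_M(rA)=rI$, so $\phi_M$ carries the ray $\{rA\}$ diffeomorphically onto the scalar ray $\{rI:r>0\}$. The latter is totally geodesic by Corollary~\ref{thm:tot}, whence $\{rA\}=\phi_M^{-1}(\{rI\})$ is totally geodesic, which proves the first assertion.

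For the explicit formula I would substitute a curve lying in the ray, $P(t)=\rho(t)A$ with $\rho>0$ and $\rho(0)=r_0$, $\rho(1)=r_1$, into the geodesic equation~\eqref{eq:GeodEq}. Since $P'=\rho'A$, one finds $G(t)=P^{-1}(t)P'(t)=(\rho'(t)/\rho(t))\,I$ for every $A$, a scalar matrix, so its deviatoric part vanishes and, setting $\alpha=\rho'/\rho$, the split form~\eqref{eq:GeodEq2} (equivalently, a direct substitution into~\eqref{eq:GeodEq}) collapses to the single scalar ODE
\[
\alpha'=-\tfrac{n\beta}{2}\,\alpha^2,\qquad \alpha=\frac{\rho'}{\rho}.
\]
Being totally geodesic, the ray contains a genuine $\Mb$-geodesic between any two of its points, and such a curve is exactly a solution of this reduced equation; this is the only genuinely geometric step, the remainder being elementary calculus.

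Finally I would integrate the separable equation. Writing $p=\tfrac{n\beta}{2}$ (nonzero since $\beta\ne0$, with $1-n\beta>0$ because $\beta<\tfrac1n$), a direct check shows that $\rho(t)=\bigl((1-t)r_0^{p}+t r_1^{p}\bigr)^{1/p}$ gives $\alpha=\rho'/\rho=(r_1^{p}-r_0^{p})/(p\,\rho^{p})$, hence $\alpha'=-(r_1^{p}-r_0^{p})^2\rho^{-2p}/p=-p\,\alpha^2$, so it solves the ODE and meets $\rho(0)=r_0$, $\rho(1)=r_1$; moreover $\rho(t)>0$ on $[0,1]$ because its base is a positive convex combination, so $P(t)\in\Pn$. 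This is precisely the asserted curve $P(t)=\bigl((1-t)r_0^{n\beta/2}+t r_1^{n\beta/2}\bigr)^{2/(n\beta)}A$. For uniqueness I would note that the ray with its induced metric is one-dimensional, so the length-minimizing curve joining two of its points (parametrized on $[0,1]$) is unique, and by total geodesy it is the $\Mb$-geodesic. The main obstacle is not the calculus but making this reduction airtight: one must ensure that the $\Mb$-geodesic between two points of the ray indeed lies in the ray, which is where the totally-geodesic property of Corollary~\ref{thm:tot}, combined with the uniqueness of geodesics in $\Mb$, does the essential work.
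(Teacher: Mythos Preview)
Your proposal is correct and follows essentially the same approach as the paper: reduce to the scalar ray $\{rI\}$ via the congruence $X\mapsto A^{-1/2}XA^{-1/2}$ (which, as you note, rescales the metric by a positive constant and therefore preserves geodesics --- the paper makes the equivalent observation in Section~\ref{sec:diag}), invoke Corollary~\ref{thm:tot} for the total geodesy of the scalar ray, and then solve the single ODE $\alpha'=-\tfrac{n\beta}{2}\alpha^2$ with $\alpha=\rho'/\rho$. The only cosmetic differences are that the paper first treats $A=I$ and then transfers the result, whereas you substitute $P(t)=\rho(t)A$ directly for general $A$, and that the paper integrates by separation of variables and afterwards fits the boundary condition, whereas you simply verify that the stated closed form satisfies the ODE and the endpoints.
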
  
\begin{proof}
We consider first the case $A=I$. By Corollary \ref{thm:tot}, the set of scalar matrices is totally geodesic, and then the geodesic joining $r_0I$ and $r_1I$ should be of the type $\xi(t)I$, where $\xi(t)$ is a positive number. 

In the system~\eqref{eq:GeodEq2}, we can set $\wt{G} = 0$, and we need to solve the two-point boundary-value problem
\begin{equation}\label{eq:alpha1}
\left\{
\begin{array}{l}
\alpha' = - \frac{n \beta}2 \alpha^2,\\
\xi' \xi^{-1} = \alpha,\\
\xi(0)=r_0,\quad \xi(1)=r_1.
\end{array}
\right.
\end{equation}
The corresponding initial-value problem, for given $\alpha(t_0)$ and $\xi(t_0)>0$, has a unique local solution for $t_0\in\R$. If $\alpha(t_0)=0$, then the unique solution is $\alpha\equiv 0$ with $\xi(t)\equiv\xi(t_0)$.

We can solve the initial-value problem with $\alpha(0)=c\ne 0$ and $\xi(0)=r_0$, by separation of variables, obtaining $\xi(t)=\bigl(\frac{n\beta}{2}ct+1\bigr)^{2/(n\beta)}r_0$, defined in a right neighborhood of $0$. Setting $\xi(1)=r_1$, we get $c=\frac{2}{n\beta}\bigl((\frac{r_1}{r_0})^{n\beta/2}-1\bigr)$. With this choice of $c$, nonzero for $r_1\ne r_0$, the initial-value problem with $\alpha(0)=c$ and $\xi(0)=r_0$, has a unique solution in $[0,1]$, such that $\xi(1)=r_1$, namely
\[
\xi(t) = \bigl( (1 - t) r_0^{\frac{n\beta}2} + t \; r_1^{\frac{n\beta}2} \bigr)^{\frac2{n\beta}},
\] 
that in turn is the unique solution of the boundary-value problem. For $r_0=r_1$ the unique solution is $\alpha\equiv 0$ and $\xi(t)\equiv r_1$.

Using the argument of Section \ref{sec:diag}, with $M=A^{-1/2}$, we have that a ray is a totally-geodesic submanifold. In particular, we have $Mr_0AM^T=r_0I$ and $Mr_1AM^T=r_1I$, and thus the geodesic joining $r_0A$ and $r_1A$ is the curve $A^{1/2}\xi(t)A^{1/2} = \xi(t)A$, that is~\eqref{eq:i}.
\end{proof}

Lemma~\ref{thm:lem7} can be restated in the following way:
\begin{Corollary}\label{thm:lindep}
Let $A$ and $B$ be linearly dependent matrices in $\Pn$. Then, there exists a unique geodesic in $\Mb$ joining $A$ and $B$ given by
\begin{equation} \label{eq:lindep}
G_\beta(A, B, t) = \bigl((1-t)A^{\frac{n\beta}{2}} + t B^{\frac{n\beta}{2}}\bigr)^{\frac{2}{n\beta}}, \qquad t \in [0,1].
\end{equation}
\end{Corollary}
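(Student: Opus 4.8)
The plan is to obtain this corollary as a direct reformulation of Lemma~\ref{thm:lem7}, the only real content being the translation of the linear-dependence hypothesis into membership in a common ray. First I would observe that two positive-definite matrices $A$ and $B$ are linearly dependent exactly when $B = \lambda A$ for some scalar $\lambda$; since both $A$ and $B$ lie in the open cone $\Pn$, the scalar is forced to be positive, $\lambda > 0$. Thus $A = r_0 A$ and $B = r_1 A$ with $r_0 = 1$ and $r_1 = \lambda$, so both endpoints sit on the ray $\{rA : r > 0\}$, which by Lemma~\ref{thm:lem7} is a totally-geodesic submanifold of $\Mb$.

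Next I would invoke Lemma~\ref{thm:lem7} with these values of $r_0$ and $r_1$. This yields the geodesic joining $A$ and $B$ as
\[
P(t) = \bigl( (1-t)\cdot 1 + t\,\lambda^{\frac{n\beta}{2}} \bigr)^{\frac{2}{n\beta}} A, \qquad t \in [0,1],
\]
together with its uniqueness, which in that lemma is established by reducing to the two-point boundary-value problem~\eqref{eq:alpha1} and solving it explicitly by separation of variables. It then remains only to check that this curve coincides with the claimed closed form $G_\beta(A,B,t)$.

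The key identity I would use is the commutation of a positive scalar with the primary matrix power: since $\lambda > 0$ and $\lambda A$ is positive definite with eigenvalues $\lambda$ times those of $A$, we have $B^{\frac{n\beta}{2}} = (\lambda A)^{\frac{n\beta}{2}} = \lambda^{\frac{n\beta}{2}} A^{\frac{n\beta}{2}}$. Substituting this into the definition of $G_\beta(A,B,t)$ and factoring $A^{\frac{n\beta}{2}}$ out of the scalar combination gives
\[
G_\beta(A,B,t) = \bigl( (1-t) + t\,\lambda^{\frac{n\beta}{2}} \bigr)^{\frac{2}{n\beta}} \bigl( A^{\frac{n\beta}{2}} \bigr)^{\frac{2}{n\beta}} = \bigl( (1-t) + t\,\lambda^{\frac{n\beta}{2}} \bigr)^{\frac{2}{n\beta}} A,
\]
which is exactly the curve $P(t)$ produced above; here the last equality uses $\bigl(A^{\frac{n\beta}{2}}\bigr)^{\frac{2}{n\beta}} = A$ for the primary matrix power of the positive-definite matrix $A$.

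As for difficulties, the argument is essentially bookkeeping and carries no serious obstacle: the only points deserving a little care are the exponent arithmetic in the scalar-power identity $(\lambda A)^q = \lambda^q A^q$ together with the factoring step, and—more conceptually—the recognition that the ``uniqueness in $\Mb$'' asserted here is precisely the uniqueness already furnished by Lemma~\ref{thm:lem7}, obtained through the totally-geodesic structure of the ray established via Corollary~\ref{thm:tot}, so that the restatement inherits uniqueness without any new analysis.
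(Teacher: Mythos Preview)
Your proposal is correct and matches the paper's approach exactly: the paper presents this corollary as a direct restatement of Lemma~\ref{thm:lem7} without a separate proof, and your argument simply makes explicit the translation from the linear-dependence hypothesis to the ray form $B=\lambda A$ together with the elementary scalar-power identity needed to match the two expressions.
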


Thus, for linearly dependent matrices, the geodesics with respect to the metric in $\Mb$ has the form of the weighted power mean with parameter $n\beta/2$, obtained by the scalar weighted power mean by substituting scalars with matrices.

As an interesting corollary we get the explicit geodesic in the scalar case that coincides with a weighted power mean.

\begin{Corollary} \label{thm:scalar}
The geodesic joining $a, b \in \mathcal{P}_1$ endowed with the metric~\eqref{eq:betametric} is the weighted power mean with parameter $\beta/2$,
\begin{equation} \label{eq:geoscalar}
G_\beta(a, b, t) = \bigl( (1 - t) a^{\frac{\beta}2} + t b^{\frac{\beta}2} \bigr)^{\frac2{\beta}}, \qquad t \in[0,1].
\end{equation}
\end{Corollary}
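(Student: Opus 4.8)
The plan is to obtain this statement as the immediate specialization of Corollary~\ref{thm:lindep} to the case $n = 1$. First I would observe that $\mathcal{P}_1$ is precisely the set of positive real numbers, regarded as $1 \times 1$ positive-definite matrices. Any two elements $a, b \in \mathcal{P}_1$ are then trivially linearly dependent, since $b = (b/a)\, a$ exhibits $b$ as a scalar multiple of $a$. Thus the hypothesis of Corollary~\ref{thm:lindep} is automatically satisfied for every pair in $\mathcal{P}_1$, and a unique geodesic joining $a$ and $b$ exists.

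Second, I would simply substitute $n = 1$ into formula~\eqref{eq:lindep}. The exponents $\tfrac{n\beta}{2}$ and $\tfrac{2}{n\beta}$ collapse to $\tfrac{\beta}{2}$ and $\tfrac{2}{\beta}$, respectively, and the matrix powers $A^{n\beta/2}$, $B^{n\beta/2}$ reduce to the ordinary scalar powers $a^{\beta/2}$, $b^{\beta/2}$. This yields exactly~\eqref{eq:geoscalar}, with uniqueness inherited from Corollary~\ref{thm:lindep}.

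There is essentially no obstacle here: the content of the scalar case is entirely absorbed into the linearly dependent case established in Lemma~\ref{thm:lem7} and Corollary~\ref{thm:lindep}. For completeness one could instead verify the claim directly: in $\mathcal{P}_1$ the metric~\eqref{eq:betametric} reduces to the one-dimensional form $g_x^\beta(a, b) = (1 - \beta)\, x^{\beta - 2}\, ab$, which is positive definite precisely because $\beta < \tfrac1n = 1$. The change of variable $u = x^{\beta/2}$ then flattens this metric to a constant multiple of the Euclidean one in $u$, whose geodesics are the affine interpolants $u(t) = (1 - t)\, a^{\beta/2} + t\, b^{\beta/2}$; transforming back via $x = u^{2/\beta}$ recovers~\eqref{eq:geoscalar}. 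Nonetheless, the cleanest route is the one-line specialization of Corollary~\ref{thm:lindep}, so that is the approach I would present.
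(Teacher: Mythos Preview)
Your proposal is correct and matches the paper's approach exactly: the paper presents this corollary immediately after Corollary~\ref{thm:lindep} with no separate proof, treating it as the obvious specialization to $n=1$. Your additional direct verification via the flattening change of variable $u=x^{\beta/2}$ is a nice sanity check but goes beyond what the paper provides.
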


\subsection{The general case}

We provide our main results, that is, a general form of the geodesic on $\Mb$ under suitable conditions. To simplify the exposition, we first need to set some notation and introduce some variables that simplify the expression of the geodesic. For $A,B\in\Pn$ we set $\wt A=\det(A)^{-1/n}A$, $\wt B=\det(B)^{-1/n}B$, that are the scaled versions with determinant one of $A$ and $B$, respectively. Let $\wt \mu_1,\ldots,\wt \mu_n$ be the eigenvalues of $\wt A^{-1}\wt B$, and define
\begin{equation} \label{eq:zeta}
\zeta_i = \ln \wt \mu_i, \qquad i=1, \ldots, n.
\end{equation}
Recalling that the Riemannian distance on $\mathcal{M}_0$ between two matrices $M, N \in \Pn$ is 
\begin{equation} \label{eq:rdist}
\delta(M, N) := \Bigl(\sum_{i=1}^n \ln^2 \lambda_i\Bigr)^{1/2},
\end{equation}
where $\lambda_1, \ldots, \lambda_n$ are the eigenvalues of $M^{-1}N$, we observe that the norm of the vector $\zeta = [\zeta_1, \ldots, \zeta_n]$ is nothing but the Riemannian distance on $\mathcal{M}_0$ between $\wt A$ and $\wt B$, i.e., 
\[
\|\zeta\| = \Bigl(\sum_{i=1}^n\ln^2 \wt \mu_i\Bigr)^{1/2} = \delta(\wt A, \wt B).
\]

We are now in a position to define, for $\beta\in(-\infty,0)\cup(0,1/n)$, the following measure of linear \textcolor{blue}{in}dependence that will play important role in subsequent developments
\begin{equation} \label{eq:gamma}
\gamma_\beta(A, B) := \frac{|\beta|\|\zeta\|}{2\sqrt{1/n-\beta}} = \frac{|\beta|\delta(\wt A,\wt B)}{2\sqrt{1/n-\beta}} = \frac{|\beta|\delta(\det(A)^{-1/n} A,\det(B)^{-1/n} B)}{2\sqrt{1/n-\beta}},
\end{equation}
with $\delta$ as in \eqref{eq:rdist}. In fact, $\gamma_\beta(\cdot, \cdot)$ is a distance function on the quotient space $\Pn/\sim$ where the equivalence relation is defined as $A \sim B$ if $A$ and $B$ are on the same ray, i.e., are multiple of one another. Some interesting properties of $\gamma_\beta(A,B)$ are summarized in the following.

\begin{Lemma}\label{thm:gamma}
For any $\beta\in(-\infty,0) \cup (0,1/n)$, the function $\gamma_\beta : \Pn \times \Pn \to \mathbb R$, defined by~\eqref{eq:gamma}, satisfies for all $A, B, C \in \Pn$
\begin{enumerate}
\item\label{item:gamma0} Symmetry: $\gamma_\beta(A,B)=\gamma_\beta(B,A)$;

\item Positive definiteness: $\gamma_\beta(A,B) \ge 0$ with equality if and only if $A$ and $B$ are linearly dependent;

\item Triangle inequality: $\gamma_\beta(A,C) \le \gamma_\beta(A,B) + \gamma_\beta(B,C)$;

\item Invariance under inversion: $\gamma_\beta(A^{-1},B^{-1}) = \gamma_\beta(A,B)$;

\item Invariance under congruence: if $M$ is invertible, then $\gamma_\beta(M^TAM,M^TBM)=\gamma_\beta(A,B)$.
\end{enumerate}
\end{Lemma}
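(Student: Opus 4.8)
The plan is to observe that $\gamma_\beta$ is, up to a positive multiplicative constant, nothing but the affine-invariant Riemannian distance $\delta$ of~\eqref{eq:rdist} evaluated on the determinant-one normalizations of its arguments, and then to deduce each of the five properties from the corresponding property of $\delta$. Writing $c_\beta := \frac{|\beta|}{2\sqrt{1/n-\beta}}$, which is strictly positive for $\beta\in(-\infty,0)\cup(0,1/n)$, and recalling from the discussion preceding the lemma that $\|\zeta\| = \delta(\wt A,\wt B)$, we have
\[
\gamma_\beta(A,B) = c_\beta\,\delta(\wt A,\wt B),\qquad \wt A=\det(A)^{-1/n}A,\ \ \wt B=\det(B)^{-1/n}B.
\]
Since $c_\beta$ depends only on $\beta$ and $n$, every property of $\gamma_\beta$ will follow from the matching property of the map $(A,B)\mapsto\delta(\wt A,\wt B)$.

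For items 1--3 I would use that $\delta$ is a genuine distance on $\Pn$ (symmetric, nonnegative, and satisfying the triangle inequality, being the geodesic distance of the Riemannian manifold $\mathcal{M}_n^0$). Symmetry and the triangle inequality for $\gamma_\beta$ then follow by multiplying the corresponding relations for $\delta(\wt\cdot,\wt\cdot)$ by $c_\beta>0$; in particular $\gamma_\beta(A,C)=c_\beta\delta(\wt A,\wt C)\le c_\beta\bigl(\delta(\wt A,\wt B)+\delta(\wt B,\wt C)\bigr)$. For positive definiteness, $\delta\ge0$ gives $\gamma_\beta\ge0$, and equality holds iff $\delta(\wt A,\wt B)=0$, i.e. iff $\wt A=\wt B$. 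The one point that needs a short argument is the identification of this equality with linear dependence: the eigenvalues $\wt\mu_i$ of $\wt A^{-1}\wt B$ all equal $1$ iff the positive-definite matrix $\wt A^{-1/2}\wt B\wt A^{-1/2}$ is the identity, i.e. iff $\wt A=\wt B$, which by the definition of the normalization is exactly the statement that $A$ and $B$ lie on the same ray.

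For items 4 and 5 I would track how the normalization interacts with inversion and congruence, and then invoke the invariances of $\delta$. A direct computation gives $\wt{A^{-1}}=\det(A)^{1/n}A^{-1}=\wt A^{-1}$, so $\gamma_\beta(A^{-1},B^{-1})=c_\beta\delta(\wt A^{-1},\wt B^{-1})$; since $\delta$ is inversion-invariant (the eigenvalues of $\wt A\,\wt B^{-1}$ are the reciprocals of those of $\wt A^{-1}\wt B$ and $\ln^2$ is even), this equals $c_\beta\delta(\wt A,\wt B)=\gamma_\beta(A,B)$. Similarly, for invertible $M$ one finds $\wt{M^TAM}=\det(M)^{-2/n}M^T\wt A M$, and likewise for $B$; because $\delta$ is invariant both under congruence $X\mapsto M^TXM$ and under a common positive scaling $X\mapsto \det(M)^{-2/n}X$ (both transformations leave the eigenvalues of the relevant product $\wt A^{-1}\wt B$ unchanged), we obtain $\delta(\wt{M^TAM},\wt{M^TBM})=\delta(\wt A,\wt B)$, hence the congruence invariance of $\gamma_\beta$.

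I do not expect a genuine obstacle here: once the factorization $\gamma_\beta=c_\beta\,\delta(\wt\cdot,\wt\cdot)$ is in place, all five claims reduce to standard properties of the affine-invariant distance (see \cite{bhatia}). The only steps requiring care are bookkeeping ones---verifying that the determinant normalization commutes with inversion and transforms correctly under congruence, and confirming that $\wt A=\wt B$ is equivalent to linear dependence---rather than any analytic difficulty. The triangle inequality, which is the sole property not reducible to a one-line eigenvalue computation, is inherited directly from the fact that $\delta$ is a bona fide geodesic distance.
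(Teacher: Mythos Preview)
Your proposal is correct and follows exactly the same approach as the paper: the paper's proof consists of the single sentence ``These properties follow directly from the analogous properties of the Riemannian distance $\delta$, see e.g.~\cite{moakher05b},'' and you have simply spelled out in detail how each property is inherited from $\delta$ via the factorization $\gamma_\beta=c_\beta\,\delta(\wt\cdot,\wt\cdot)$.
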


\begin{proof}
These properties follow directly from the analogous properties of the Riemanian distance $\delta$, see e.g.~\cite{moakher05b}.
\end{proof}

For later use, we note that if $A$ and $B$ are linearly independent then $d := \delta(\wt A, \wt B)\ne 0$ and $\hat\beta_1$ and $\hat\beta_2$ given by
\begin{equation} \label{eq:beta12}
\hat\beta_1 := - 2\pi \frac{\sqrt{\pi^2n^2+nd^2}+\pi n}{nd^2}, \quad
\hat\beta_2 := 2\pi \frac{\sqrt{\pi^2n^2+nd^2} -\pi n}{nd^2},
\end{equation}
are well defined and satisfy $\hat\beta_1  < 0 < \hat\beta_2 < 1/n$. We have that $0<\gamma_\beta(A, B) <\pi$ if and only if $\beta\in(\hat\beta_1,0)\cup(0,\hat\beta_2)$. We also note that we have $\hat\beta_1\to -\infty$ and $\hat\beta_2\to 1/n$ as $d\to 0$.

Our main result is that, under some restrictions on $\beta$, we have the existence and the explicit expression of the geodesic joining two matrices in $\Pn$.
But before stating the main theorem, we give the following two results in the case of diagonal matrices. 

\begin{Lemma}\label{thm:lemma}
Let $D_A,D_B$ be two diagonal matrices in $\Pn$, $\beta\in(-\infty,0)\cup(0,1/n)$ and $\gamma:=\gamma_\beta(D_A,D_B)$ as in~\eqref{eq:gamma}. If $0<\gamma<\pi$, then there exists a unique geodesic $\wt P(t)$ on $\Mb$ joining $D_A$ and $D_B$ such that $\wt P(t)=\diag(\lambda_1(t),\ldots,\lambda_n(t))$, where\footnote{The two-argument (or four-quadrant) inverse tangent function, $\arctantwo(x, y)$, returns values in the closed interval $[-\pi, \pi]$ based on the values of $x$ and $y$, as opposed to $\arctan(y/x)$ which returns values in the closed interval $[-\pi/2, \pi/2]$, see e.g. 
\url{https://www.mathworks.com/help/matlab/ref/atan2.html}.}
\[
\begin{split}
	\lambda_i & = \biggl((1-t)(1-t+t\sigma\cos\gamma)\exp\Bigl(\frac{n\beta}{\gamma}\arctantwo(t\sigma\sin\gamma,1-t+t\sigma\cos\gamma)\zeta_i\Bigr)\\
                        & +t(t+(1-t)\sigma^{-1}\cos\gamma)\exp\Bigl(-\frac{n\beta}{\gamma}\arctantwo((1-t)\sin\gamma,(1-t)\cos\gamma +t\sigma)\zeta_i\Bigr)
				    \biggr)^{\frac{1}{n\beta}}
\end{split}
\]
for $i=1,\ldots,n$ and $t\in[0,1]$, with 
\[	
\sigma=\det(D_A^{-1}D_B)^{\beta/2},\quad \zeta_i=\ln\Bigl(\frac{\lambda_i(1)}{\lambda_i(0)}\sigma^{-2/(n\beta)}\Bigr), \quad \gamma=\frac{|\beta|\| \zeta \|}{2\sqrt{1/n-\beta}}.
\]
\end{Lemma}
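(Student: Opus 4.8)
The plan is to reduce the problem to an ordinary differential equation on the diagonal submanifold and then to uncover a change of coordinates in which the geodesic becomes a straight segment in a flat plane. First I would invoke Corollary~\ref{thm:tot}: the positive-definite diagonal matrices form a totally-geodesic submanifold of $\Mb$, so it suffices to search for the geodesic among diagonal curves, since these furnish genuine geodesics of $\Mb$, and we shall see the one we construct is the only geodesic joining the endpoints. For such a curve $\wt P(t)=\diag(\lambda_1(t),\dots,\lambda_n(t))$ the matrix $G=\wt P^{-1}\wt P'$ is diagonal with entries $(\ln\lambda_i)'$, and the geodesic system collapses to~\eqref{eq:GeodEq2} for the isotropic part $\alpha=\tfrac1n\trace G$ and the deviatoric part $\wt G$. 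The deviatoric equation $\wt G'=-n\beta\,\alpha\wt G$ integrates immediately to $\wt g_i(t)=\wt g_i(0)\rho(t)$ with the common factor $\rho(t)=\exp(-n\beta\int_0^t\alpha)$; hence the deviatoric velocity keeps a fixed direction, proportional to the vector $\zeta$ of~\eqref{eq:zeta}, and the whole geodesic is confined to the $2$-plane spanned by the scalar (determinant) direction and $\zeta$.

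The second step is to integrate what remains. In the logarithmic coordinates $y_i=\ln\lambda_i$ the induced metric on the diagonal submanifold is $(\det)^\beta$ times the constant form $\sum dy_i^2-\beta(\sum dy_i)^2$; splitting $y$ into $u=\sum y_i$ and its deviatoric part and rescaling, this becomes the conformally flat metric $e^{c\tau}(d\tau^2+|d\wt y|^2)$ with $c=\beta/\sqrt{1/n-\beta}$. The substitution $\xi=\tfrac2c e^{c\tau/2}$, so that $\xi\propto(\det\wt P)^{\beta/2}$, turns the $2$-plane containing the geodesic into the Euclidean plane in polar coordinates, $d\xi^2+\tfrac{c^2}{4}\xi^2\,ds^2$. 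Consequently the geodesic is a \emph{straight segment} in that plane. Equivalently, avoiding the geometric language, one sets $s=\rho^{-1/2}$ and uses conservation of the speed $g^\beta_{\wt P}(\wt P',\wt P')$ to reduce the isotropic equation to the Newtonian form $s''=C\,s^{-3}$ with $C=\tfrac{n\beta^2}{4(1-n\beta)}\|\wt g(0)\|^2$, whose solution $s^2$ is a positive quadratic in $t$ and whose primitive $\int\rho$ is an arctangent; this is exactly the polar description of the chord.

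The third step is to read off the endpoints and match the boundary data. In polar coordinates the two endpoints sit on rays making an angle equal to $\gamma=\gamma_\beta(D_A,D_B)$ of~\eqref{eq:gamma} (the rescaling constant $c$ is precisely what converts the $\mathcal M_0$-distance $\|\zeta\|$ into this angle), while the ratio of their radii is $\sigma=\det(D_A^{-1}D_B)^{\beta/2}$. The chord joining them is the linear interpolation $(1-t)\,e_0+t\sigma\,e_\gamma$ of the endpoint position vectors, and its polar angle is the $\arctantwo$ appearing in the statement; pushing the pair $(\xi(t),\text{angle}(t))$ back through $\xi\mapsto\det$ and $\text{angle}\mapsto\wt y$ and recombining $u$ and $\wt y$ into $y_i=\ln\lambda_i$ gives the stated expression, the two summands being the two coordinates of the chord relative to the two endpoint rays. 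Finally, $0<\gamma<\pi$ is exactly the condition that the two rays lie in an open half-plane, so the chord stays at strictly positive radius (it never meets the apex $\xi=0$) and is the unique segment between the endpoints; this yields both existence and uniqueness of the geodesic and shows it remains in $\Pn$.

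The main obstacle I expect is the bookkeeping in the last step: carrying out the explicit inversion of the coordinate change and organizing the radial factor $\xi(t)^2$ together with the angular factor into the precise closed form, with the correct powers of $\sigma$ and the correct branch of $\arctantwo$. The one genuinely delicate point, rather than merely computational, is the range argument: one must verify that for $0<\gamma<\pi$ the chord indeed avoids $\xi=0$ and that no competing geodesic exists, which is what makes the hypothesis $0<\gamma<\pi$ (equivalently $\beta\in(\hat\beta_1,0)\cup(0,\hat\beta_2)$ of~\eqref{eq:beta12}) indispensable.
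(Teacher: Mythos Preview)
Your approach is correct and genuinely different from the paper's. The paper integrates the geodesic system \eqref{eq:GeodEq2} by hand: after the same reduction to diagonal curves and the same observation that the deviatoric velocity keeps a fixed direction, it guesses the substitution $h(t)=(1-t)\phi+t\psi$, shows that $\alpha(t)=\frac{1}{n\beta}\frac{2hh'}{1+h^2}$ and $\nu_\ell(t)\propto\frac{h'}{1+h^2}$ solve the reduced two-dimensional system, and then spends the bulk of the argument determining the constants $\phi,\psi,b_i$ from the boundary data and massaging the resulting expression via arctangent identities into the symmetric form of the statement. Your route instead explains \emph{why} that substitution works: in the coordinates $(\tau,\wt y)$ the diagonal metric is $e^{c\tau}(d\tau^2+|d\wt y|^2)$, and the further change $\xi=\tfrac{2}{c}e^{c\tau/2}$ identifies the relevant $2$-plane with the Euclidean plane in polar coordinates, so the geodesic is literally a chord and the $\arctantwo$ is its polar angle. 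This buys you a transparent reading of the quantities $\sigma$ (ratio of radii) and $\gamma$ (opening angle), an immediate proof of the distance formula of Section~\ref{sec:dist} via the law of cosines, and a clean geometric reason for the hypothesis $0<\gamma<\pi$ (the chord avoids the apex). The paper's approach, by contrast, is more self-contained at the level of ODEs and does not rely on recognising a hidden flat structure; it also makes the determination of the integration constants (your ``bookkeeping'' step) completely explicit, which you would still have to carry out. One small point to watch: the uniqueness asserted in the lemma is only among \emph{diagonal} geodesics, so you need not argue that every $\Mb$-geodesic between $D_A$ and $D_B$ is diagonal---only that the $2$-plane containing both endpoints and the $\tau$-axis is totally geodesic in the diagonal submanifold, which follows from the same reflection argument as Corollary~\ref{thm:tot}.
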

For the convenience of the reader, the rather lengthy proof of this Lemma is given in the Appendix. 

The condition $\gamma<\pi$ may appear to be a bothering restriction, however it cannot be relaxed. Indeed, by observing that for a given couple $(A,B)$ of positive-definite matrices, there exist $\wt \beta_1<0<\wt \beta_2$ such that $\gamma(A,B;\wt \beta_1)=\gamma(A,B,\wt \beta_2)=\pi$ and $\gamma(A,B,\beta)<\pi$ for $\beta\in(\wt \beta_1,0)\cup(0,\beta_2)$, we get
\[
\lim_{\beta\to\beta_1^+} \lambda_i\Bigl(\frac{1}{1+\sigma}\Bigr) = \infty,\qquad
\lim_{\beta\to\beta_2^-} \lambda_i\Bigl(\frac{1}{1+\sigma}\Bigr)= 0,
\]
and hence, we understand why the result cannot be further extended for $\gamma\ge \pi$. 

From now on, we will assume that $0<\gamma<\pi/2$, in order to get a simple expression of the geodesic and to obtain neater results, for instance removing the appearance of the $\arctantwo$ function. 
We should note that in this case, $\beta$ should be in the interval $(\beta_1, 0) \cup (0, \beta_2)$ where
\begin{equation} \label{eq:beta1beta2}
\beta_1 := - \pi \frac{\sqrt{\pi^2n^2+4nd^2}+\pi n}{2nd^2}, \qquad \beta_2 := \pi \frac{\sqrt{\pi^2n^2+4nd^2} -\pi n}{2nd^2}.
\end{equation}

From Lemma~\ref{thm:lemma} we can obtain a simple expression for the geodesics, by assuming that $0<\gamma<\pi/2$.
\begin{Corollary} \label{thm:cor}
Let $D_A,D_B$ be two diagonal matrices in $\Pn$, $\beta\in(-\infty,0)\cup(0,1/n)$ and $\gamma:=\gamma_\beta(D_A,D_B)$ as in~\eqref{eq:gamma}. If $0<\gamma<\pi/2$, then there exists a unique geodesic $\wt P(t)$ on $\Mb$ joining $D_A$ and $D_B$ such that $\wt P(t)=\diag(\lambda_1(t),\ldots,\lambda_n(t))$, where
\begin{equation} \label{eq:scalar}
\lambda_i(t) = \biggl(\frac{(1-t)^2+\sigma^2t^2+2\sigma t (1-t)\cos\gamma}{\sigma^{2\alpha(t)}}\biggr)^{\frac{1}{n\beta}} \lambda_i(0)^{1-\alpha(t)}\lambda_i(1)^{\alpha(t)}, \qquad t \in [0,1],
\end{equation}
with $\sigma = \det(D_A^{-1}D_B)^{\beta/2}$ and
\[	
\alpha(t) = \frac{1}{\gamma}\arctan\biggl(\frac{t\sigma \sin\gamma}{1-t+t\sigma\cos\gamma}\biggr).
\]
\end{Corollary}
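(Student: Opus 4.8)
The plan is to deduce the statement from Lemma~\ref{thm:lemma} by algebraic simplification alone, since the two results concern the very same object: the hypothesis $0<\gamma<\pi/2$ is strictly stronger than the $0<\gamma<\pi$ of Lemma~\ref{thm:lemma}, so existence and uniqueness of the geodesic $\wt P(t)=\diag(\lambda_1(t),\ldots,\lambda_n(t))$ come for free, and the whole task reduces to rewriting each entry $\lambda_i(t)$ in the closed form~\eqref{eq:scalar}. The first thing I would record is that, for $0<\gamma<\pi/2$, one has $\cos\gamma>0$ and $\sin\gamma>0$, so that for every $t\in[0,1]$ and every $\sigma>0$ the second arguments $1-t+t\sigma\cos\gamma$ and $(1-t)\cos\gamma+t\sigma$ of the two $\arctantwo$'s in Lemma~\ref{thm:lemma} are strictly positive. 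Consequently both four-quadrant arctangents reduce to ordinary arctangents with values in $(0,\pi/2)$, and the first of them is, by definition, exactly $\gamma\,\alpha(t)$.

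The conceptual core is an angle-addition identity that becomes transparent in complex notation. Writing $\mathrm i$ for the imaginary unit and setting $z_1:=(1-t)+t\sigma\,e^{\mathrm i\gamma}$ and $z_2:=t\sigma+(1-t)e^{\mathrm i\gamma}$, I would verify three things: that $\operatorname{Re}z_1=1-t+t\sigma\cos\gamma$ and $\operatorname{Re}z_2=(1-t)\cos\gamma+t\sigma$ are, up to the scalar prefactors $(1-t)$ and $t\sigma^{-1}$ respectively, the coefficients premultiplying the two exponentials in Lemma~\ref{thm:lemma}; that $\arg z_1$ and $\arg z_2$ are the two arctangents just discussed; and that $z_2=e^{\mathrm i\gamma}\,\overline{z_1}$. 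This single relation delivers the two facts on which everything hinges, namely $\arg z_1+\arg z_2=\gamma$, whence the second arctangent equals $\gamma\,(1-\alpha(t))$ and the two exponents collapse to $n\beta\,\alpha(t)\,\zeta_i$ and $-n\beta\,(1-\alpha(t))\,\zeta_i$; and $|z_1|=|z_2|=R$, where $R^2=(1-t)^2+\sigma^2t^2+2\sigma t(1-t)\cos\gamma$ is exactly the numerator in~\eqref{eq:scalar}.

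With the exponents in this form I would then collect the two summands. Using $e^{\zeta_i}=\frac{\lambda_i(1)}{\lambda_i(0)}\sigma^{-2/(n\beta)}$ from Lemma~\ref{thm:lemma}, each exponential becomes a product of a power of $\lambda_i(1)/\lambda_i(0)$ and a power of $\sigma$; together with the endpoint values this lets one pull out of both summands the common factor $\sigma^{-2\alpha(t)}\lambda_i(0)^{(1-\alpha(t))n\beta}\lambda_i(1)^{\alpha(t)n\beta}$. What remains inside the bracket is then the purely real combination $(1-t)\operatorname{Re}z_1+t\sigma\operatorname{Re}z_2$, which a one-line computation identifies with $R^2$ (indeed it equals $\operatorname{Re}[(1-t)z_1+t\sigma z_2]$). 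Raising the resulting expression to the power $1/(n\beta)$ yields~\eqref{eq:scalar}. As a sanity check I would confirm the endpoints: at $t=0$ one has $\alpha(0)=0$ and $R^2=1$, returning $\lambda_i(0)$, while at $t=1$ one has $R^2=\sigma^2$ and $\alpha(1)=\tfrac1\gamma\arctan(\tan\gamma)=1$, returning $\lambda_i(1)$; the evaluation $\arctan(\tan\gamma)=\gamma$ is precisely where the restriction $\gamma<\pi/2$ is used a second time.

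I expect the only real difficulty to lie in this last collection step, specifically in the exponent bookkeeping: one must track the powers of $\sigma$ arising simultaneously from the coefficient $t(t+(1-t)\sigma^{-1}\cos\gamma)$, from the factors $\sigma^{\pm 2/(n\beta)}$ hidden in $e^{\zeta_i}$, and from the target $\sigma^{-2\alpha(t)}$, and verify that they cancel so that the two summands genuinely share the common factor above and differ only through the scalar weights $(1-t)\operatorname{Re}z_1$ and $t\sigma\operatorname{Re}z_2$. The angle-addition and equal-modulus observations are the insight, but it is this accounting of the $\sigma$- and $\zeta_i$-powers, rather than any new geometric ingredient, that makes the manipulation delicate.
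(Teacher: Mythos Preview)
Your proposal is correct and follows essentially the same route as the paper's proof: both start from Lemma~\ref{thm:lemma}, reduce $\arctantwo$ to $\arctan$ via $\cos\gamma>0$, establish that the two arctangents sum to $\gamma$, substitute $e^{\zeta_i}=\frac{\lambda_i(1)}{\lambda_i(0)}\sigma^{-2/(n\beta)}$, and collect the two summands into $R^2$. The only presentational difference is that the paper obtains the angle identity directly from the arctangent addition formula (checking $\phi_1(t)\phi_2(t)<1$), whereas you package both the angle sum and the equal-modulus facts into the single complex relation $z_2=e^{\mathrm i\gamma}\overline{z_1}$; this is a tidy repackaging but not a different argument.
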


\begin{proof}
The key observation is that for $0<\gamma<\pi/2$, the geodesic in Lemma~\ref{thm:lemma} can be written as
\[
\lambda_i(t) = \lambda_i(0)\Bigl((1-t)(1-t+t\sigma\cos\gamma)(e^{\zeta_i})^{n\beta\alpha_1(t)} + t(t+(1-t)\frac{1}{\sigma}\cos\gamma)(e^{\zeta_i})^{-n\beta\alpha_2(t)}\Bigl(\frac{\lambda_i(1)}{\lambda_i(0)}\Bigr)^{n\beta}\Bigr)^{\frac{1}{n\beta}},
\]
where
\[
\alpha_1(t) = \frac{1}{\gamma}\arctan \phi_1(t),
\qquad
\alpha_2(t) = \frac{1}{\gamma}\arctan \phi_2(t),
\]
with
\[
\phi_1(t)=\frac{t\sigma\sin\gamma}{1-t+t\sigma\cos\gamma},
\qquad
\phi_2(t)=\frac{(1-t)\sin\gamma}{t\sigma+(1-t)\cos\gamma}.
\]
This follows from the fact that $\cos\gamma>0$ for $0<\gamma<\pi/2$, that in turn implies that both arguments of the $\arctantwo$ function are positive for $t\in(0,1)$ and we can substitute the latter function with the arctangent of the quotient.

We note that $\alpha_1(t)+\alpha_2(t)=1$. This result can be obtained by observing that the arguments of the arctangent $\phi_1(t)$ and $\phi_2(t)$ are such that $\phi_1(t)\phi_2(t)<1$, since $\cos\gamma>0$, because $0<\gamma<\pi/2$. The arctangent summation formula shows that $\arctan\phi_1(t)+\arctan\phi_2(t)=\arctan\tan\gamma=\gamma$, that in turn yields $\alpha_1(t)+\alpha_2(t)=1$. We further observe that
\[
\Bigl(\frac{\lambda_i(1)}{\lambda_i(0)}\Bigr)^{n\beta}=(e^{\zeta_i})^{n\beta}\sigma^2,
\]
or equivalently
\[
e^{\zeta_i}=\frac{\lambda_i(1)}{\lambda_i(0)}\sigma^{-2/(n\beta)}.
\]
Then, using these two observations, we obtain
\begin{align*}
\lambda_i(t) &= \lambda_i(0)\bigl((1-t)^2+2t(1-t)\sigma\cos\gamma+t^2\sigma^2\bigr)^{\frac1{n\beta}}(e^{\zeta_i})^{\alpha(t)} \\
&=\bigl((1-t)^2+2t(1-t)\sigma\cos\gamma+t^2\sigma^2\bigr)^{\frac1{n\beta}} \sigma^{-\frac{2\alpha(t)}{n\beta}}\lambda_i(0)^{1 - \alpha(t)} \lambda_i(1)^{\alpha(t)},
\end{align*}
where we have set $\alpha(t) = 1 - \alpha_1(t) = \alpha_2(t)$.
\end{proof}

Based on the reduction to the diagonal case we are now in a position to give the geodesic in the general case.
\begin{theorem} \label{thm:mainAB}
Let $A,B\in\Pn$. If $A$ and $B$ are linearly independent, then setting $\gamma:=\gamma_\beta(A,B)$, as in~\eqref{eq:gamma}, we have that for $\beta\in(\beta_1,0)\cup(0,\beta_2)$, where $\beta_1$ and $\beta_2$ are defined in~\eqref{eq:beta1beta2}, there exists a unique geodesic on $\Mb$ joining $A$ and $B$ and whose explicit expression is 
\begin{equation} \label{eq:maing}
G_\beta(A,B,t)=\eta(t)(A\#_{\alpha(t)} B)=\eta(t)A(A^{-1}B)^{\alpha(t)},\quad t\in[0,1],
\end{equation}
where
\begin{equation} \label{eq:alphaeta}
\alpha(t):=\frac{1}{\gamma}\arctan\Bigl(\frac{t\sigma\sin\gamma}{1-t+t\sigma\cos\gamma}\Bigr),\quad
\eta(t):=\Bigl(\frac{(1-t)^2+2t(1-t)\sigma \cos\gamma+t^2\sigma^2}{\sigma^{2\alpha(t)}}\Bigr)^{\frac{1}{n\beta}}
\end{equation}
with $\sigma=\det(A^{-1}B)^{\beta/2}$.

If $A$ and $B$ are linearly dependent, then for $\beta\in(-\infty,0)\cup(0,\frac{1}{n})$, there exists a unique geodesic on $\Mb$ joining $A$ and $B$ and whose explicit expression is 
\begin{equation} \label{eq:maing2}
G_\beta(A,B,t) = ((1-t)A^{\frac{n\beta}{2}}+tB^{\frac{n\beta}{2}})^{\frac{2}{n\beta}}, \qquad t \in [0,1].
\end{equation}
\end{theorem}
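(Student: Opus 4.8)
The plan is to dispatch the linearly dependent case by citation and to reduce the linearly independent case to the diagonal computation of Corollary~\ref{thm:cor} via the congruence reduction of Section~\ref{sec:diag}. The linearly dependent case is immediate: formula~\eqref{eq:maing2} is exactly the statement of Corollary~\ref{thm:lindep}, which was established for every $\beta\in(-\infty,0)\cup(0,1/n)$ through Lemma~\ref{thm:lem7}, so nothing further is needed there.

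For the linearly independent case I would invoke the reduction of Section~\ref{sec:diag}: choose $M\in GL(n)$ with $M^TAM=I$ and $M^TBM=D=\diag(d_1,\ldots,d_n)$ positive diagonal (e.g.\ $M=A^{-1/2}U$ with $U$ orthogonal diagonalizing $A^{-1/2}BA^{-1/2}$). As computed in Section~\ref{sec:diag}, the congruence $X\mapsto M^TXM$ multiplies the length functional $\mathscr L^\beta$ by the base-point-independent constant $|\det M|^\beta=\det(A)^{-\beta/2}>0$; hence it carries geodesics of $\Mb$ bijectively to geodesics and preserves both existence and uniqueness. Consequently the geodesic joining $A$ and $B$ is $P(t)=M^{-T}\wt P(t)M^{-1}$, where $\wt P(t)$ is the unique geodesic joining $I$ and $D$ supplied by Corollary~\ref{thm:cor}. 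The congruence invariance of $\gamma_\beta$ (Lemma~\ref{thm:gamma}, item~5) gives $\gamma_\beta(A,B)=\gamma_\beta(I,D)$, so the hypothesis $0<\gamma<\pi/2$ transfers verbatim and, by~\eqref{eq:beta1beta2}, is equivalent to $\beta\in(\beta_1,0)\cup(0,\beta_2)$; moreover $\det(A^{-1}B)=\det(D)$, so the parameter $\sigma=\det(A^{-1}B)^{\beta/2}$ of the theorem coincides with the $\sigma$ of Corollary~\ref{thm:cor}.

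It then remains to unwind the back-transformation. Applying Corollary~\ref{thm:cor} with $\lambda_i(0)=1$ and $\lambda_i(1)=d_i$ collapses its formula to $\wt P(t)=\eta(t)\,D^{\alpha(t)}$, with $\eta$ and $\alpha$ precisely as in~\eqref{eq:alphaeta}. From $A=M^{-T}M^{-1}$ and $B=M^{-T}DM^{-1}$ one gets $A^{-1}B=MDM^{-1}$, so by the commutativity of the primary matrix power with similarities recalled in the introduction, $(A^{-1}B)^{\alpha(t)}=MD^{\alpha(t)}M^{-1}$ and hence $A(A^{-1}B)^{\alpha(t)}=M^{-T}D^{\alpha(t)}M^{-1}$. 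Substituting into $P(t)=M^{-T}\wt P(t)M^{-1}$ yields $P(t)=\eta(t)\,A(A^{-1}B)^{\alpha(t)}$, which is~\eqref{eq:maing}.

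The only genuinely delicate point is the claim that this \emph{non-unimodular} congruence transports geodesics: Lemma~\ref{thm:lem1v} only covers $\det M=\pm1$, so I would lean on the sharper observation of Section~\ref{sec:diag} that an arbitrary congruence rescales every curve length by one and the same positive constant, whence the critical curves of $\mathscr L^\beta$ are preserved. Everything else is a direct substitution, the heavy analytic work — integrating the geodesic system~\eqref{eq:GeodEq2} — having already been carried out in the Appendix proof of Lemma~\ref{thm:lemma} and packaged in Corollary~\ref{thm:cor}.
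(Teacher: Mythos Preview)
Your proposal is correct and follows essentially the same route as the paper's proof: dispatch the dependent case via Corollary~\ref{thm:lindep}, reduce the independent case to $I$ and $D$ via Section~\ref{sec:diag}, apply Corollary~\ref{thm:cor} with $\lambda_i(0)=1$, and undo the congruence using the commutativity of matrix powers with similarities. Your explicit remarks on why the non-unimodular congruence still carries geodesics to geodesics and why the parameters $\gamma$ and $\sigma$ transfer are accurate and slightly more detailed than the paper's own account.
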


\begin{proof}
When $A$ and $B$ are linearly dependent we can use Corollary \ref{thm:lindep}. Assuming that the two matrices are independent, we can use the argument of Section \ref{sec:diag} and in order to obtain a geodesic $G_\beta(A,B,t)$ on $\Mb$ joining $A$ and $B$, it is enough to find a geodesic $\wt P(t)=\diag(\lambda_1(t),\ldots,\lambda_n(t))$ joining the two diagonal matrices $I=M^TAM$ and $D=M^TBM$ and then apply the inverse congruence to $\wt P(t)$. If $\beta\in(\beta_1,0)\cup(0,\beta_2)$, that is $0<\gamma<\pi/2$, then the geodesic is obtained using Corollary~\ref{thm:cor}, with $D_A=I$ and $D_B=D=\diag(\mu)$, where  $\mu=[\mu_1,\ldots,\mu_n]$, is the vector of the eigenvalues of $A^{-1}B$. 

Setting $\eta(t)$ as in~\eqref{eq:alphaeta} and using the expression provided in~\eqref{eq:scalar} and the fact that $\lambda_i(0)=1$, for $i=1,\ldots,n$, we obtain $\lambda_i(t)=\eta(t)\mu_i^{\alpha(t)}$, and thus
\begin{multline} \nonumber
G_\beta(A,B,t)=M^{-T}\wt P(t)M^{-1} = \eta(t)M^{-T}M^{-1}M\diag(\mu_1^{\alpha(t)},\ldots,\mu_n^{\alpha(t)})M^{-1} \\
= \eta(t)M^{-T}M^{-1}\bigl(M\diag(\mu_1,\ldots,\mu_n)M^{-1} \bigr)^{\alpha(t)}=\eta(t)A(A^{-1}B)^{\alpha(t)}=\eta(t)(A\#_{\alpha(t)}B),
\end{multline}
where the third equality is obtained by using the commutativity of primary matrix functions with similarities (applied to the fractional power of matrices).
\end{proof}

Theorem~\ref{thm:main}, provides the restrictions on $\beta$ guaranteeing that a geodesic connecting two given matrices $A$ and $B$ exists. Now, for a given $\beta$, we provide conditions on $A$ and $B$ that ensure the existence of a geodesic joining them.
\begin{theorem} \label{thm:main}
Let $A,B\in\Pn$, $\beta\in(-\infty,0)\cup(0,1/n)$, and $\gamma:=\gamma_\beta(A,B)$ as in~\eqref{eq:gamma}. If $\gamma<\pi/2$, then there exists a unique geodesic $G_\beta(A,B,t)$ on $\Mb$ joining $A$ and $B$ and whose explicit expression, for $\gamma\ne 0$, is given in~\eqref{eq:maing}, while, for $\gamma=0$, is given in~\eqref{eq:maing2}.
\end{theorem}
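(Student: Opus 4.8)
The plan is to recognize that Theorem~\ref{thm:main} is simply Theorem~\ref{thm:mainAB} read from a different angle: whereas Theorem~\ref{thm:mainAB} fixes the pair $(A,B)$ and describes the admissible range of $\beta$, here $\beta$ is fixed and the hypothesis is phrased intrinsically through the quantity $\gamma=\gamma_\beta(A,B)$. Accordingly, I would split the argument into the two cases $\gamma=0$ and $0<\gamma<\pi/2$ and, in each, reduce to the corresponding branch of Theorem~\ref{thm:mainAB}. No new geometric input is needed; the only genuine work is to translate the scalar condition $\gamma<\pi/2$ into the condition $\beta\in(\beta_1,0)\cup(0,\beta_2)$ used there.

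For the case $\gamma=0$, the positive-definiteness property in Lemma~\ref{thm:gamma} gives that $\gamma_\beta(A,B)=0$ holds if and only if $A$ and $B$ are linearly dependent. The linearly dependent branch of Theorem~\ref{thm:mainAB} (equivalently Corollary~\ref{thm:lindep}) then yields, for every admissible $\beta\in(-\infty,0)\cup(0,1/n)$, the existence and uniqueness of the geodesic together with the explicit expression~\eqref{eq:maing2}. This disposes of the $\gamma=0$ part of the statement.

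For the case $0<\gamma<\pi/2$, the same property of Lemma~\ref{thm:gamma} shows that $A$ and $B$ are linearly independent, so that $d:=\delta(\wt A,\wt B)>0$ and the quadratic polynomial below is genuinely of degree two. I would then establish the equivalence
\[
\gamma_\beta(A,B)<\frac{\pi}{2}\quad\Longleftrightarrow\quad \beta\in(\beta_1,0)\cup(0,\beta_2),
\]
with $\beta_1,\beta_2$ as in~\eqref{eq:beta1beta2}. Starting from the definition~\eqref{eq:gamma}, squaring the inequality $\tfrac{|\beta| d}{2\sqrt{1/n-\beta}}<\tfrac{\pi}{2}$ (both sides nonnegative, and $1/n-\beta>0$ on the admissible range) turns it into $d^2\beta^2+\pi^2\beta-\pi^2/n<0$. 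A direct computation of the roots of this polynomial, after rationalizing the quadratic formula, identifies them precisely with $\beta_1$ and $\beta_2$; since the leading coefficient $d^2$ is positive, the inequality holds exactly on the open interval $(\beta_1,\beta_2)$. Intersecting with the admissible domain $(-\infty,0)\cup(0,1/n)$ and using $\beta_1<0<\beta_2<1/n$ leaves $(\beta_1,0)\cup(0,\beta_2)$. With this membership in hand, the linearly independent branch of Theorem~\ref{thm:mainAB} delivers the unique geodesic and the explicit formula~\eqref{eq:maing}, completing the proof.

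The argument is essentially bookkeeping, and the only real computation---hence the step most prone to error---is matching the roots of $d^2\beta^2+\pi^2\beta-\pi^2/n$ with the expressions in~\eqref{eq:beta1beta2}, which requires carefully recasting $\sqrt{\pi^2+4d^2/n}$ as $\sqrt{\pi^2n^2+4nd^2}/n$ before simplifying. A secondary point worth checking is the ordering $\beta_2<1/n$ needed for a clean intersection with the domain; this follows from the limiting behaviour $\gamma_\beta\to\infty$ as $\beta\to(1/n)^-$ for $d>0$, which forces the root $\beta_2$ to lie strictly below $1/n$.
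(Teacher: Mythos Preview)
Your proposal is correct and matches the paper's treatment: the paper states Theorem~\ref{thm:main} without a separate proof, presenting it explicitly as the reformulation of Theorem~\ref{thm:mainAB} obtained by fixing $\beta$ and phrasing the hypothesis through $\gamma$ (the equivalence $0<\gamma<\pi/2 \Leftrightarrow \beta\in(\beta_1,0)\cup(0,\beta_2)$ is recorded just before~\eqref{eq:beta1beta2}). Your write-up simply fills in the bookkeeping the paper leaves implicit, and the root computation you flag as the only delicate step is indeed correct.
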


\begin{remark}\rm
Note that $\tan (\gamma \alpha(t)) = \frac {t \sigma \sin \gamma}{1-t+t \sigma \cos \gamma}$. Therefore, $\gamma \alpha(t)$ can be interpreted geometrically as the angle of a right triangle with adjacent side of length $1-t+t \sigma \cos \gamma$ and opposite side of length $t \sigma \sin \gamma$ (see Fig.~\ref{fig:triangle}). Let $\ell(t)$ denotes the length of the hypotenuse side, i.e.,
\[
\ell(t) := \sqrt{(1-t+t \sigma \cos \gamma)^2 + (t \sigma \sin \gamma)^2}.
\]
Then, as $\alpha'(t) = \frac{\sigma \sin \gamma}{\gamma \ell(t)^2}$, the angle $\gamma \alpha(t)$ is monotonically increasing from 0 to $\gamma$ as $t$ varies from 0 to 1.

\begin{center}
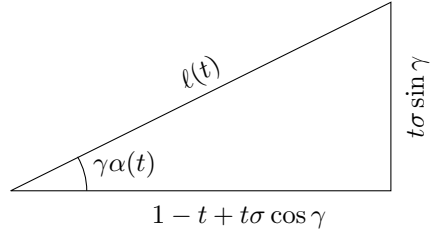
\begin{figure}[ht] 
\begin{center}
\def\t{-30}
\begin{tikzpicture}[scale=5]
\draw (0,0) -- (1,0) -- (1,0.5) -- (0,0);
\draw (0.2,0) arc (0:30:0.18);
\node at (0.6,-0.07) {\footnotesize $1-t+t\sigma\cos\gamma$};
\node[cm={0,1,-1,0,(0,0)}] at (1.07,0.25) {\footnotesize $t\sigma\sin\gamma$};
\node[cm={cos(\t),-sin(\t),sin(\t),cos(\t),(0,0)}] at (0.5,0.31) {\footnotesize $\ell(t)$};
\node at (0.3,0.07) {\footnotesize $\gamma\alpha(t)$};
\end{tikzpicture}
\end{center}
\caption{Geometric interpretation of $\alpha(t)$.}
\label{fig:triangle}
\end{figure}
\end{center}

We further observe that the function $\eta(t)$ can then be compactly written as
\[
\eta(t) = \left(\frac{\ell(t)}{\sigma^{\alpha(t)}} \right)^{\frac{2}{n\beta}}.
\]
\end{remark}  

\section{A new power mean for positive definite matrices} \label{sec:mean}

The geodesic given in Theorem~\ref{thm:mainAB} can be seen as a new possible generalization of the power mean to matrices. While for linearly dependent matrices the expression for $G_\beta(A,B,t)$ coincides with the ``power-Euclidean'' mean
\[
P_p(A, B, t) := ((1-t) A^p + t B^p)^{1/p},
\]
for $p=n\beta/2$ and with the power mean of matrices defined by Lim and P\'alfia \cite{limpalfia}, that is
\[
Q_p(A, B, t) := A \#_{1/p}((1-t) A + t (A\#_p B)),
\]
for linearly independent matrices it is different. This is easily seen by considering $I$ and a diagonal matrix $D=\diag(d_{11},\ldots,d_{nn})$, that is not a multiple of $I$. For $I$ and $D$ the power-Euclidean and the power mean of parameter $p=n\beta/2$ coincide and the diagonal entries are $(1-t+td_{ii}^p)^{1/p}$, while a diagonal entry of the proposed mean is obtained by using all diagonal entries of $D$.
\begin{example} \label{ex:1}\rm
Let $A=I$ and $B=\begin{bmatrix} 1 & 0\\0 & 2\end{bmatrix}$ and let $\beta=-1$, $p=-1$, $t=0.5$,
\[
P_p(A,B,t) = Q_p(A,B,t)=\begin{bmatrix} 1 & 0\\ 0 & 4/3\end{bmatrix},\qquad
G_\beta(A,B,t) \approx \begin{bmatrix} 1.01995 & 0\\0 & 1.35889\end{bmatrix},
\]
while the latter has been obtained numerically approximating the result to $6$ significant digits. 
\end{example}

Even if the proposed mean does not reduce to the straightforward power mean for commuting matrices, it fulfills a number of interesting properties of a power mean.

Before listing the properties, we recall here the following properties of the weighted geometric mean that will be used frequently in the sequel. It is remarkable that the $t$-weighted mean $G_\beta(A, B, t)$ of $A$ and $B$ \textcolor{blue}{is} a scalar multiple of the $\alpha(t)$-weighted geometric mean of $A$ and $B$. This further exhibits the intimate relation between the proposed mean and the geometric mean. 

\begin{Lemma} \label{thm:lemsharp}
Let $A,B\in\Pn$ and let $A\#_tB=A(A^{-1}B)^t$, for $t\in[0,1]$, be their weighted geometric mean. We have:
\begin{enumerate}
\item \label{item:sharp1} $(M^TAM)\#_t (M^TBM)=M^T(A\#_tB)M$, for $M$ invertible and $t\in[0,1]$;
\item \label{item:sharp2} $A\#_tB=B\#_{1-t}A$, for $t\in[0,1]$;
\item \label{item:sharp3} $A^{-1}\#_t B^{-1}=A^{-1}(A\#_{1-t}B)B^{-1}=B^{-1}(A\#_{1-t}B)A^{-1}$, for $t\in[0,1]$;
\item \label{item:sharp4} $(aA)\#_t (bB)=a^{1-t}b^t(A\#_tB)$, for $a,b>0$ and $t\in[0,1]$.
\end{enumerate}
\end{Lemma}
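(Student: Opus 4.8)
The plan is to verify each of the four identities directly from the definition $A\#_tB=A(A^{-1}B)^t$, relying on the commutativity of primary matrix functions with similarities (stated in the introduction) as the central algebraic tool. The only genuinely delicate points are making sure that $A^{-1}B$ is diagonalizable with positive eigenvalues so that its real power is well defined, and carefully tracking which matrix plays the role of the ``base'' in each reformulation; the computations themselves are routine once the right similarity is identified.

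For property~\ref{item:sharp1}, I would write $(M^TAM)^{-1}(M^TBM)=M^{-1}A^{-1}BM$ and invoke the similarity-commutativity property to obtain $\bigl((M^TAM)^{-1}(M^TBM)\bigr)^t=M^{-1}(A^{-1}B)^tM$. Substituting back gives $(M^TAM)\bigl(M^{-1}(A^{-1}B)^tM\bigr)=M^TA(A^{-1}B)^tM=M^T(A\#_tB)M$, as desired. For property~\ref{item:sharp2}, the key observation is that $A^{-1}B$ and $B^{-1}A=(A^{-1}B)^{-1}$ are inverses of one another; I would show $B\#_{1-t}A=B(B^{-1}A)^{1-t}=B(A^{-1}B)^{t-1}$ and then verify that $A(A^{-1}B)^t=B(A^{-1}B)^{t-1}$ by factoring out $(A^{-1}B)^{t-1}$ on the right, since $A(A^{-1}B)=B$. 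This last step uses that $A$ commutes with no power of $A^{-1}B$ in general, so I must be careful to keep the factor $(A^{-1}B)^{t-1}$ on the correct side throughout.

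For property~\ref{item:sharp3}, I would start from $A^{-1}\#_tB^{-1}=A^{-1}(AB^{-1})^t$ and aim to match it with $A^{-1}(A\#_{1-t}B)B^{-1}$. Expanding the target, $A^{-1}(A\#_{1-t}B)B^{-1}=A^{-1}A(A^{-1}B)^{1-t}B^{-1}=(A^{-1}B)^{1-t}B^{-1}$, and I must check this equals $A^{-1}(AB^{-1})^t$. The cleanest route is to use the similarity $AB^{-1}=A(A^{-1}B)^{-1}A^{-1}$, so that $(AB^{-1})^t=A(A^{-1}B)^{-t}A^{-1}$ by similarity-commutativity, giving $A^{-1}(AB^{-1})^t=(A^{-1}B)^{-t}A^{-1}$; it then remains to confirm $(A^{-1}B)^{-t}A^{-1}=(A^{-1}B)^{1-t}B^{-1}$, which follows since $(A^{-1}B)A^{-1}=A^{-1}B\,A^{-1}$ and $B^{-1}=(A^{-1}B)^{-1}A^{-1}$. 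The second equality in~\ref{item:sharp3} is obtained symmetrically, or by combining with property~\ref{item:sharp2}.

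Finally, property~\ref{item:sharp4} is the most direct: since $(aA)^{-1}(bB)=\frac{b}{a}A^{-1}B$, and scaling a matrix by a positive scalar $\frac{b}{a}$ simply scales its eigenvalues, one has $\bigl(\frac{b}{a}A^{-1}B\bigr)^t=\bigl(\frac{b}{a}\bigr)^t(A^{-1}B)^t$ because the scalar $\frac{b}{a}I$ commutes with everything. Hence $(aA)\#_t(bB)=aA\cdot\bigl(\frac{b}{a}\bigr)^t(A^{-1}B)^t=a^{1-t}b^t\,A(A^{-1}B)^t=a^{1-t}b^t(A\#_tB)$. The main obstacle across all four parts is purely bookkeeping: ensuring that each appeal to similarity-commutativity is applied to a genuine similarity (conjugation by an invertible matrix) rather than to a product that merely looks conjugate, and keeping noncommuting factors on their correct sides.
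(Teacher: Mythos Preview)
Your argument is correct: each of the four identities follows from the definition $A\#_tB=A(A^{-1}B)^t$ together with the commutativity of primary matrix functions with similarities, exactly as you outline. The minor quibble is that in part~\ref{item:sharp3} your sentence ``$(A^{-1}B)A^{-1}=A^{-1}B\,A^{-1}$'' is a tautology rather than the step you need; the actual identity you use (and implicitly establish) is $(A^{-1}B)^{1-t}B^{-1}=(A^{-1}B)^{-t}(A^{-1}B)B^{-1}=(A^{-1}B)^{-t}A^{-1}$, which is fine.

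As for comparison with the paper: there is none to make. The paper states this lemma as a collection of standard facts about the weighted geometric mean and offers no proof, simply recalling the properties ``for later use.'' Your direct verification is therefore more than the paper provides, and it is the natural way to justify these identities from first principles.
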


The quantities $\sigma$, $\gamma$, and the functions $\alpha(t)$, $\eta(t)$ appearing in Theorem~\ref{thm:mainAB} depend on $A$ and $B$. When needed, we will use the notation $\sigma(A,B)$, $\gamma(A,B)$, $\alpha(A,B,t)$, $\eta(A,B,t)$, to explicit this dependence. We give in the following lemma some of their symmetry and invariance properties that will be useful in the proof of the properties of the mean $G_\beta(A,B,t)$. 
\begin{lemma} \label{lem:alpha-eta-sigma}
The functions  $\sigma(A,B)$, $\gamma(A, B)$, $\alpha(A,B,t)$, $\eta(A,B,t)$ satisfy the following properties:
\begin{align*}
& \sigma(A, B) = \sigma(B, A)^{-1} = \sigma(B^{-1}, A^{-1}) = \sigma(M^TAM,M^TBM), \\
& \gamma(A, B) = \gamma(B, A) = \gamma(B^{-1}, A^{-1}) = \gamma(M^TAM,M^TBM), \\
& \alpha(A, B, t) = 1-\alpha(B, A, 1-t) = 1-\alpha(A^{-1},B^{-1}, 1-t) = \alpha(M^TAM,M^TBM, t), \\
& \eta(A, B, t) = \eta(B, A, 1-t) = \eta(A^{-1},B^{-1}, 1-t) = \eta(M^TAM,M^TBM, t),
\end{align*}
for any invertible matrix $M \in \R^{n\times n}$ and $t\in[0,1]$. Furthermore, for $t\in[0,1]$ and $a, b >0$, we have
\begin{align*}
  & \sigma(aA,bB) = (b/a)^{n\beta/2} \sigma(A,B), \\
  & \gamma(aA,bB) = \gamma(A,B), \\
  & \alpha(aA,bB,t) = \alpha(A,B,\wt t), \\
  & \eta(aA,bB,t) =  \frac{((1-t)a^{n\beta/2}+tb^{n\beta/2})^{2/(n\beta)}}{a^{1-\alpha(A,B,\wt t)}b^{\alpha(A,B,\wt t)}} \eta(A,B,\wt t),
\end{align*}
where $q=(b/a)^{n\beta/2}$ and 
$\wt t=\frac{tq}{1-t+tq}$.
\end{lemma}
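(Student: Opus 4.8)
The plan is to verify each identity by straightforward substitution into the definitions $\sigma=\det(A^{-1}B)^{\beta/2}$, $\gamma=\gamma_\beta(A,B)$, and the formulas for $\alpha(t)$ and $\eta(t)$ in~\eqref{eq:alphaeta}, leaning on the already-established properties of $\gamma_\beta$ from Lemma~\ref{thm:gamma} and on the arctangent addition identity used in the proof of Corollary~\ref{thm:cor}. First I would dispatch $\sigma$: the first line reduces to elementary determinant manipulations, namely $\det(B^{-1}A)=\det(A^{-1}B)^{-1}$ for $\sigma(B,A)=\sigma(A,B)^{-1}$, then $\det(BA^{-1})=\det(A^{-1}B)$ for the inversion identity, and similarity-invariance $\det((M^TAM)^{-1}M^TBM)=\det(M^{-1}A^{-1}BM)=\det(A^{-1}B)$ for congruence-invariance; the scaling identity follows from $\det((aA)^{-1}(bB))=(b/a)^n\det(A^{-1}B)$.

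For $\gamma$, the symmetry, inversion-invariance, and congruence-invariance are precisely three of the properties recorded in Lemma~\ref{thm:gamma}, so nothing new is needed. For the scaling identity I would observe that normalization kills the scalar factor: $\wt{aA}=\det(aA)^{-1/n}(aA)=\det(A)^{-1/n}A=\wt A$, whence $\delta(\wt{aA},\wt{bB})=\delta(\wt A,\wt B)$ and $\gamma(aA,bB)=\gamma(A,B)$.

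The identities for $\alpha$ and $\eta$ then follow by inserting the transformed $\sigma$ and $\gamma$ into~\eqref{eq:alphaeta}. The congruence cases are immediate, since $\sigma$ and $\gamma$ are both congruence-invariant. For the reflection cases, note that both $(B,A,1-t)$ and $(A^{-1},B^{-1},1-t)$ produce the same pair $(\sigma^{-1},\gamma)$: substituting this pair and replacing $t$ by $1-t$ turns the argument of the arctangent defining $\alpha$ into $\frac{(1-t)\sin\gamma}{t\sigma+(1-t)\cos\gamma}$, which is exactly the quantity $\phi_2(t)$ from the proof of Corollary~\ref{thm:cor}; the addition formula established there gives $\tfrac1\gamma(\arctan\phi_1(t)+\arctan\phi_2(t))=1$, hence $\alpha(B,A,1-t)=\alpha(A^{-1},B^{-1},1-t)=1-\alpha(A,B,t)$. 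Carrying the same pair $(\sigma^{-1},\gamma)$ through $\eta$, factoring $\sigma^{-2}$ out of the numerator recovers the numerator of $\eta(A,B,t)$, while the denominator $(\sigma^{-1})^{2(1-\alpha(t))}$ absorbs precisely that $\sigma^{-2}$, giving $\eta(B,A,1-t)=\eta(A^{-1},B^{-1},1-t)=\eta(A,B,t)$.

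The main obstacle, and the only place needing genuine care, is the pair of scaling identities for $\alpha$ and $\eta$, which hinge on the reparametrization $\wt t=\frac{tq}{1-t+tq}$ with $q=(b/a)^{n\beta/2}$. Here I would first record $1-\wt t=\frac{1-t}{1-t+tq}$, so that $\wt t$ and $1-\wt t$ share the common denominator $s:=1-t+tq$. Substituting $\sigma(aA,bB)=q\sigma$ into the arctangent argument and clearing the factor $s$ shows it coincides with the argument built from $(A,B)$ at parameter $\wt t$, yielding $\alpha(aA,bB,t)=\alpha(A,B,\wt t)$. For $\eta$, the same substitution makes the numerator scale by $s^2$ relative to the numerator of $\eta(A,B,\wt t)$, while the denominator picks up an extra factor $q^{2\alpha(A,B,\wt t)}$; collecting these and using the factorizations $s=a^{-n\beta/2}\bigl((1-t)a^{n\beta/2}+tb^{n\beta/2}\bigr)$ and $q^{1/(n\beta)}=(b/a)^{1/2}$ converts the prefactor $\bigl(s^2/q^{2\alpha(A,B,\wt t)}\bigr)^{1/(n\beta)}$ into exactly $\frac{((1-t)a^{n\beta/2}+tb^{n\beta/2})^{2/(n\beta)}}{a^{1-\alpha(A,B,\wt t)}b^{\alpha(A,B,\wt t)}}$. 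This bookkeeping of exponents is the crux; the remaining steps are routine algebra.
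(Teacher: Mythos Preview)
Your proposal is correct and follows essentially the same route as the paper's own proof: the paper dispatches the first block of identities by saying they ``follow directly from the definitions of $\sigma$, $\gamma$, $\alpha(t)$, $\eta(t)$ (and from the proof of Corollary~\ref{thm:cor} for $\alpha(t)$)'', and for the scaling block it writes out precisely the identity of arctangent arguments under the substitution $\wt t=\frac{tq}{1-t+tq}$ and then the corresponding relation between the $\eta$-numerators, which is exactly your $s^2$/$q^{2\alpha}$ bookkeeping. Your write-up is simply a more explicit version of what the paper sketches.
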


\begin{proof}
The first set of properties follows directly from the definitions of $\sigma$, $\gamma$, $\alpha(t)$, $\eta(t)$ (and from the proof of Corollary~\ref{thm:cor} for $\alpha(t)$).

For the second set of properties, we have  
\[
\frac{\wt t\sigma(A,B)\sin \gamma(A,B)}{1-\wt t+\wt t\sigma(A,B)\cos \gamma(A,B)} = \frac{t\sigma(aA,bB)\sin\gamma(aA,bB)}{1-t+t\sigma(aA,bB)\cos\gamma(aA,bB)},
\]
from which it follows that $\alpha(A,B,\wt t)=\alpha(aA,bB,t)$ and
\[
\frac{(1-\wt t)^2+2\wt t\sigma(A,B)\cos \gamma(A,B)+t^2\sigma(A,B)^2}{\sigma(A,B)^{2\alpha(A,B,\wt t)}}
= \frac{(1-t)^2+2t\sigma(aA,bB)\cos\gamma(aA,bB)+t^2\sigma(aA,bB)^2}{\sigma(aA,bB)^{2\alpha(aA,bB,t)}q^{-2\alpha(aA,bB,t)}(1-t+tq)^2},
\]
from which we get
\[
\eta(A,B,\wt t)=\eta(aA,bB,t)\frac{(b/a)^{\wt \alpha}a}{((1-t)a^{n\beta/2}+tb^{n\beta/2})^{2/(n\beta)}},
\]
where $\wt \alpha:=\alpha(A,B,\wt t)=\alpha(aA,bB,t)$.
\end{proof}

We will use the properties of the weighted geometric mean of two matrices listed in Lemma~\ref{thm:lemsharp} and the results of Lemma~\ref{lem:alpha-eta-sigma} to prove some properties of the proposed mean $G_\beta(A,B,t)$, then we will discuss some asymptotic properties.

\begin{theorem} \label{thm:properties}
Let $\beta\in(-\infty,0)\cup(0,1/n)$, $A,B\in\Pn$ such that $0<\gamma<\pi/2$, with $\gamma:=\gamma_\beta(A,B)$ as in~\eqref{eq:gamma}. The following properties hold.
\begin{enumerate}
\item \label{item:M2} $G_\beta(M^TAM,M^TBM,t)=M^TG_\beta(A,B,t)M$, for any $M\in\R^{n\times n}$ invertible and $t\in[0,1]$.
\item \label{item:change2} $G_\beta(A,B,t)=G_\beta(B,A,1-t)$, for $t\in[0,1]$.
\item \label{item:inversion2}
$G_\beta(A^{-1},B^{-1},t)=A^{-1}G_\beta(A,B,1-t)B^{-1}=A^{-1}G_\beta(B,A,t)B^{-1}$\\
$\hphantom{G_\beta(A^{-1},B^{-1},t)}  =B^{-1}G_\beta(B,A,t)A^{-1} =B^{-1}G_\beta(A,B, 1-t)A^{-1}$, for $t\in[0,1]$.

\item \label{item:inversion3}
$\frac{G_\beta(A^{-1},B^{-1},t)}{\eta(A^{-1},B^{-1},t)} = \big( \frac{G_\beta(A,B,t^\#)}{\eta(A,B,t^\#)} \big)^{-1}$, where for $t\in[0,1]$, $t^\#$ is such that $\alpha(A,B,t^\#) = \alpha(A,B,1-t)$.

\item \label{item:hom2} $G_\beta(aA,bB,t)=
((1-t)a^{n\beta/2}+tb^{n\beta/2})^{\frac{2}{n\beta}}G_\beta\Bigl(A,B,\frac{t b^{n\beta/2}}{(1-t)a^{n\beta/2}+t b^{n\beta/2}}
\Bigr)$ \\
$\hphantom{G_\beta(aA,bB,t)}=
((1-t)a^{n\beta/2}+tb^{n\beta/2})^{\frac{2}{n\beta}}G_\beta\Bigl(B,A,\frac{(1-t) a^{n\beta/2}}{(1-t)a^{n\beta/2}+t b^{n\beta/2}}
\Bigr)$, for $a,b$ positive reals and~{$t\in[0,1]$}.
\item\label{item:sharp2'} $G_\beta(A,B,1/(1+\sigma))=
\bigl(\frac{2\sqrt{\sigma}\cos(\gamma/2)}{1+\sigma}\bigr)^{\frac{2}{n\beta}}
A\#_{1/2}B$, where $\sigma=\det(A^{-1}B)^{\beta/2}$.
\end{enumerate}
\end{theorem}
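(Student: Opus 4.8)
The plan is to treat all six items through a single mechanism. By Theorem~\ref{thm:mainAB}, for linearly independent $A,B$ with $0<\gamma<\pi/2$ we may write $G_\beta(A,B,t)=\eta(A,B,t)\,(A\#_{\alpha(A,B,t)}B)$, so each identity factors into a \emph{scalar} part, governing the prefactor $\eta$, and a \emph{matrix} part, governing the weighted geometric mean $\#$. I would then discharge the scalar part using the symmetry, inversion, congruence and homogeneity relations for $\sigma,\gamma,\alpha,\eta$ collected in Lemma~\ref{lem:alpha-eta-sigma}, and the matrix part using the corresponding relations for $\#$ in Lemma~\ref{thm:lemsharp}. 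Because the quotient $G_\beta(A,B,t)/\eta(A,B,t)$ is exactly $A\#_{\alpha(A,B,t)}B$, items phrased in terms of such quotients reduce purely to identities for $\#$.

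For item~\ref{item:M2} I would substitute $\alpha(M^TAM,M^TBM,t)=\alpha(A,B,t)$ and $\eta(M^TAM,M^TBM,t)=\eta(A,B,t)$ from Lemma~\ref{lem:alpha-eta-sigma}, then apply congruence invariance $(M^TAM)\#_s(M^TBM)=M^T(A\#_sB)M$ (item~\ref{item:sharp1} of Lemma~\ref{thm:lemsharp}). Item~\ref{item:change2} follows in the same way from $\eta(B,A,1-t)=\eta(A,B,t)$ and $\alpha(B,A,1-t)=1-\alpha(A,B,t)$ together with $B\#_{1-s}A=A\#_sB$ (item~\ref{item:sharp2}). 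For item~\ref{item:inversion2} I would use $\eta(A^{-1},B^{-1},t)=\eta(A,B,1-t)$ and $\alpha(A^{-1},B^{-1},t)=1-\alpha(A,B,1-t)$, then rewrite $A^{-1}\#_sB^{-1}=A^{-1}(A\#_{1-s}B)B^{-1}=B^{-1}(A\#_{1-s}B)A^{-1}$ (item~\ref{item:sharp3}) to produce the displayed forms, the remaining reorderings coming from item~\ref{item:change2}, already proved.

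Items~\ref{item:inversion3} and~\ref{item:hom2} are the delicate ones. For item~\ref{item:hom2}, writing $\wt\alpha:=\alpha(A,B,\wt t)$ with $\wt t=\tfrac{tq}{1-t+tq}$ and $q=(b/a)^{n\beta/2}$, I would combine $(aA)\#_{\wt\alpha}(bB)=a^{1-\wt\alpha}b^{\wt\alpha}(A\#_{\wt\alpha}B)$ (item~\ref{item:sharp4}) with the homogeneity formulas for $\alpha$ and $\eta$ in Lemma~\ref{lem:alpha-eta-sigma}; the factor $a^{1-\wt\alpha}b^{\wt\alpha}$ cancels exactly against the denominator in $\eta(aA,bB,t)$, leaving $((1-t)a^{n\beta/2}+tb^{n\beta/2})^{2/(n\beta)}G_\beta(A,B,\wt t)$, where one checks $\wt t=\tfrac{tb^{n\beta/2}}{(1-t)a^{n\beta/2}+tb^{n\beta/2}}$; the second form follows from item~\ref{item:change2}. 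For item~\ref{item:inversion3}, since $G_\beta/\eta=A\#_{\alpha}B$, the claim amounts to matching $A^{-1}\#_{\alpha(A^{-1},B^{-1},t)}B^{-1}$ with the inverse of a geometric mean of $A,B$; here I would invoke the inversion identity $(A\#_sB)^{-1}=A^{-1}\#_sB^{-1}$ (obtained by combining items~\ref{item:sharp2} and~\ref{item:sharp3}) and align the weights through the defining relation for $t^\#$, using that $t\mapsto\alpha(A,B,t)$ is strictly increasing (indeed $\alpha'(t)=\tfrac{\sigma\sin\gamma}{\gamma\,\ell(t)^2}>0$ by the Remark following Theorem~\ref{thm:main}), hence invertible on $[0,1]$, so that $t^\#$ is well defined.

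Finally, item~\ref{item:sharp2'} I would prove by direct evaluation at $t_*=1/(1+\sigma)$. The argument of the arctangent defining $\alpha(t_*)$ collapses, via $\tfrac{\sin\gamma}{1+\cos\gamma}=\tan(\gamma/2)$, to $\tan(\gamma/2)$, giving $\alpha(t_*)=1/2$ and hence the factor $A\#_{1/2}B$; substituting $\alpha(t_*)=1/2$ into $\eta$ and using $1+\cos\gamma=2\cos^2(\gamma/2)$ reduces its bracket to $\bigl(\tfrac{2\sqrt{\sigma}\cos(\gamma/2)}{1+\sigma}\bigr)^2$, whose $1/(n\beta)$ power is the claimed prefactor. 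The only genuinely delicate point is item~\ref{item:inversion3}: unlike the others it is not a plain substitution but requires the inversion identity for $\#$ together with careful bookkeeping of the weights through the reparametrization $t^\#$ and the monotonicity of $\alpha(A,B,\cdot)$; everything else reduces, after the decomposition $G_\beta=\eta\cdot(A\#_{\alpha}B)$, to the algebraic relations already recorded in Lemmas~\ref{lem:alpha-eta-sigma} and~\ref{thm:lemsharp}.
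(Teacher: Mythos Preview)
Your proposal is correct and follows essentially the same route as the paper: decompose $G_\beta(A,B,t)=\eta(A,B,t)\,(A\#_{\alpha(A,B,t)}B)$, then discharge each identity by combining the symmetry/invariance relations of Lemma~\ref{lem:alpha-eta-sigma} for the scalar part with the corresponding properties of $\#$ from Lemma~\ref{thm:lemsharp} for the matrix part. The only minor deviation is in item~\ref{item:inversion3}: the paper derives it by first invoking item~\ref{item:inversion2} (already proved) and then unwinding $A^{-1}G_\beta(A,B,1-t)B^{-1}$ explicitly, whereas you propose to go directly through the inversion identity $(A\#_sB)^{-1}=A^{-1}\#_sB^{-1}$ together with the reparametrization $t^\#$; both arguments are equivalent, and your observation that the monotonicity of $\alpha(A,B,\cdot)$ is what makes $t^\#$ well defined is a useful point the paper leaves implicit.
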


\begin{proof}
Using the results of Lemma~\ref{lem:alpha-eta-sigma} we have:
\begin{enumerate}
\item 
\begin{align*}	
G_\beta(M^TAM,M^TBM,t)&=\eta(M^TAM,M^TBM,t)((M^TAM)\#_{\alpha(M^TAM,M^TBM,t)}(M^TBM))\\
&=\eta(A,B,t)M^T(A\#_{\alpha(A,B,t)}B)M=M^TG_\beta(A,B,t)M.
\end{align*}

\item 
\[
G_\beta(B,A,1-t)=\eta(A,B,t)(B\#_{1-\alpha(A,B,t)}A)=\eta(A,B,t)(A\#_{\alpha(A,B,t)}B)=G_\beta(A,B,t).
\]

\item
\[
G_\beta(A^{-1},B^{-1},t)=\eta(B,A,t)(A^{-1}\#_{\alpha(B,A,t)}B^{-1})=
A^{-1}(\eta(B,A,t) (A\#_{\alpha(B,A,t)} B))B^{-1},
\]
from which the proof follows using property~\ref{item:sharp3} in Lemma~\ref{thm:lemsharp}.

\item Since $G_\beta(A^{-1},B^{-1},t) = A^{-1} G_\beta(A,B,1-t) B^{-1}$, we get
\begin{align*}
G_\beta(A^{-1},B^{-1},t) &= A^{-1} \eta(A,B,1-t) A(A^{-1}B)^{\alpha(A,B,1-t)} B^{-1} \\
&= \eta(B,A,t) (A^{-1}B)^{1 - \alpha(B,A,t)} B^{-1} = \eta(A^{-1},B^{-1},t) (B^{-1}A)^{\alpha(B,A,t)} A^{-1} \\
&= \eta(A^{-1},B^{-1},t) \big( A(A^{-1}B)^{\alpha(A,B,1-t)} \big)^{-1}
= \eta(A^{-1},B^{-1},t) \Big( \frac{G_\beta(A,B,t^{\#})}{\eta(A,B,t^{\#})} \Big)^{-1}.
\end{align*}

\item With the notation of Lemma \ref{lem:alpha-eta-sigma} and its proof, we get
\[
G_\beta(A,B,\wt t)=\eta(A,B,\wt t)(A\#_{\wt \alpha}B)=
\frac{\eta(aA,bB,t)a^{1-\wt \alpha}b^{\wt \alpha} (A\#_{\wt \alpha} B)}{((1-t)a^{n\beta/2}+tb^{n\beta/2})^{2/(n\beta)}}=
\frac{G_\beta(aA,bB,t)}{((1-t)a^{n\beta/2}+tb^{n\beta/2})^{2/(n\beta)}},
\]
where we have used also item \ref{item:sharp4} of Lemma \ref{thm:lemsharp}.

\item For $t=1/(1+\sigma)$, we have $\alpha(t)=1/2$ and the formula is obtained through a simple manipulation using the cosine duplication formula.
\end{enumerate}
\end{proof}

We note that some of these properties are shared with the power mean of Lim and P\'alfia, for instance, properties 1, 2 and 5, holds also for the latter, see Proposition 3.5 of \cite{limpalfia}. A big difference is the behavior with respect to inversion, while the mean by Lim and P\'alfia verifies the inversion property
\[
Q_p(A, B, t) = (Q_{-p}(A^{-1}, B^{-1}, t))^{-1},
\]
the proposed mean fulfills properties 3 and 4 of Theorem \ref{thm:properties} which are (in general) different than the inversion property. Moreover, it is unclear whether the new mean has some property related to the natural order of positive-definite matrices.

Next, we provide some asymptotic properties of the new mean.
\begin{theorem}
For all $A,B\in\Pn$ we have
\[
\lim_{\beta\to 0} G_{\beta}(A, B, t) = A \#_t B.
\]
\end{theorem}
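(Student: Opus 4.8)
The plan is to pass to the limit directly in the explicit geodesic formulas of Theorem~\ref{thm:mainAB}, treating the linearly independent and the linearly dependent cases separately. Since $\gamma_\beta(A,B)=\tfrac{|\beta|\,\delta(\wt A,\wt B)}{2\sqrt{1/n-\beta}}\to 0$ as $\beta\to 0$, for all sufficiently small $|\beta|$ one has $\gamma:=\gamma_\beta(A,B)<\pi/2$, so the closed form~\eqref{eq:maing}, namely $G_\beta(A,B,t)=\eta(t)\,A(A^{-1}B)^{\alpha(t)}$, is available throughout the limiting process when $A$ and $B$ are independent; when they are dependent the relevant formula is~\eqref{eq:maing2}, which is valid for every admissible $\beta$.

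For the independent case I would first record the elementary asymptotics. Writing $s:=\ln\det(A^{-1}B)$, one has $\sigma=\det(A^{-1}B)^{\beta/2}=e^{\beta s/2}=1+\tfrac{\beta s}{2}+O(\beta^2)$, while $\gamma=O(\beta)$ forces $\cos\gamma=1+O(\beta^2)$ and $\sin\gamma=\gamma+O(\beta^3)$. Substituting these into the definition~\eqref{eq:alphaeta} of $\alpha(t)$ shows that the argument of the arctangent equals $t\gamma\,(1+O(\beta))$, whence $\alpha(t)=\tfrac1\gamma\arctan\bigl(t\gamma(1+O(\beta))\bigr)=t+O(\beta)$; in particular $\alpha(t)\to t$. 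By continuity of the exponent in the primary matrix function $s\mapsto(A^{-1}B)^s$, this yields $A\#_{\alpha(t)}B=A(A^{-1}B)^{\alpha(t)}\to A(A^{-1}B)^t=A\#_tB$.

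The crux is to prove that the scalar prefactor satisfies $\eta(t)\to 1$; this is the only delicate point, since $\eta(t)$ presents the indeterminate form $1^\infty$. I would write
\[
\eta(t)=\exp\!\Big(\tfrac{1}{n\beta}\big(\ln N-2\alpha(t)\ln\sigma\big)\Big),\qquad N:=(1-t)^2+2t(1-t)\sigma\cos\gamma+t^2\sigma^2,
\]
and expand the exponent to first order in $\beta$. Using the asymptotics above one finds $N=1+\beta s t+O(\beta^2)$, so $\ln N=\beta s t+O(\beta^2)$, while $2\alpha(t)\ln\sigma=2\bigl(t+O(\beta)\bigr)\cdot\tfrac{\beta s}{2}=\beta s t+O(\beta^2)$. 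The two leading $\beta s t$ contributions cancel, leaving $\ln N-2\alpha(t)\ln\sigma=O(\beta^2)$; dividing by $n\beta$ gives $O(\beta)\to 0$, hence $\eta(t)\to 1$. Combined with the previous paragraph this gives $G_\beta(A,B,t)=\eta(t)\,(A\#_{\alpha(t)}B)\to A\#_tB$. The main obstacle is exactly this cancellation: one must carry both $\ln N$ and $\alpha(t)\ln\sigma$ to order $\beta$ and verify that their linear terms agree, so that after division by $n\beta$ nothing survives in the limit.

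Finally, in the linearly dependent case $B=rA$ for some $r>0$, so that $A^{-1}B=rI$ and $A\#_tB=r^tA$. Formula~\eqref{eq:maing2} then collapses to $\big((1-t)+t\,r^{n\beta/2}\big)^{2/(n\beta)}A$, and the elementary scalar limit $\big((1-t)+t\,r^{p}\big)^{1/p}\to r^{t}$ as $p=n\beta/2\to 0$ gives $G_\beta(A,B,t)\to r^tA=A\#_tB$, completing the argument.
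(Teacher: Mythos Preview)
Your argument is correct and follows essentially the same route as the paper: both verify that $\sigma=1+\tfrac{\beta s}{2}+O(\beta^2)$ and $\gamma=O(\beta)$, deduce $\alpha(t)\to t$, and then show the $1^\infty$-indeterminate factor $\eta(t)\to 1$ via the cancellation of the first-order terms in $\ln N$ and $2\alpha(t)\ln\sigma$. The only cosmetic difference is that the paper first reduces by congruence to $A=I$, $B=D$ diagonal and works entrywise, whereas you apply the asymptotics directly to the closed form~\eqref{eq:maing} and invoke continuity of $s\mapsto(A^{-1}B)^s$; you also treat the dependent case explicitly, which the paper dismisses as ``without loss of generality.''
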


\begin{proof}
With no loss of generality we can assume that $A$ and $B$ are linearly independent. For $\beta$ sufficiently small $G_\beta(A,B,t)$ is well defined so that we can take the limit.

Since both $G_\beta(A,B,t)$ and the weighted geometric mean commute with congruences, we can assume that $A=I$ and $B=D=\diag(d_1,\ldots,d_n)$ and prove that $G_\beta(I,D)\to I\#_t D=D^t$. In this case $G_\beta(A,B,t)=\diag(\lambda_1(t),\ldots,\lambda_n(t))$, and it is enough to prove that $\lambda_i(t)\to d_i^t$ for $i=1,\ldots,n$.

Setting $\omega_0=\ln\det(D)$, we observe that, as $\beta\to 0$,
\[
\sigma=1+\omega_0\beta/2+o(\beta),\qquad \gamma=\omega_1 \beta+o(\beta),
\]
where $\omega_1$ is a nonzero constant. From this we deduce that for $\beta\to 0$
\[
t\sigma\sin\gamma=t\gamma+o(\beta),\qquad 1-t+t\sigma\cos\gamma=1+o(1),\qquad
   \alpha(t) = t+o(1),\qquad
d_i^{\alpha}=d_i^t+o(1),
\]
and, analogously
\[
(1-t)^2+2t(1-t)\sigma\cos\gamma+t^2\sigma^2=1+\omega_0\beta t+o(\beta),\qquad
\sigma^{2\alpha}=1+\omega_0\beta t+o(\beta),\qquad \eta(t)=1+o(\beta).
\]
Finally, since $\lambda_i(0)=1$ and $\lambda_i(1)=d$, we have
\[
\lambda_i(t)=\lambda_i(0)^{1-\alpha(t)}\lambda_i(1)^{\alpha(t)}\eta(t)=d_i^t+o(1).
\]
that is what we wanted to prove.
\end{proof}

In Theorem~\ref{thm:mainAB}, there is a distinction between the case in which $A$ and $B$ are linearly dependent and the case in which they are not, and the form of the geodesic is different. One might ask whether this mean is continuous with respect to $A$ and $B$, for this reason, we show that, for a given $t$ and $\beta$, when the two matrices approach a couple of linearly dependent matrices, the formula~\eqref{eq:maing} tends to~\eqref{eq:maing2}.

\begin{theorem} \label{thm:gamma0}
Let $M\in\Pn$, $N_0,N_1\in\Sn$ and $\omega_0,\omega_1$ be positive constants. Let $A(\tau)=\omega_0 M+\tau N_0$ and $B(\tau)=\omega_1 M+\tau N_1$, be two curves on $\Pn$, defined in a neighborhood of $0$ and such that $A(\tau)$ and $B(\tau)$ are linearly independent for $\tau\ne 0$. We have that
\[
\lim_{\tau\to 0} G_\beta(A(\tau),B(\tau),t)=G_\beta(A(0),B(0),t),
\]
for $\beta\in(-\infty,0)\cup(0,1/n)$.
\end{theorem}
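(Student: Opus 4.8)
The plan is to pass to the limit $\tau\to 0$ directly inside the explicit expression~\eqref{eq:maing} valid for linearly independent matrices, and then to check that the result agrees with the linearly dependent formula~\eqref{eq:maing2} evaluated at $A(0)=\omega_0 M$ and $B(0)=\omega_1 M$, which by Corollary~\ref{thm:lindep} equals $\bigl((1-t)\omega_0^{n\beta/2}+t\omega_1^{n\beta/2}\bigr)^{2/(n\beta)}M$. First I would note that for $\tau$ in a punctured neighborhood of $0$ the pair $(A(\tau),B(\tau))$ is linearly independent with $\gamma:=\gamma_\beta(A(\tau),B(\tau))$ arbitrarily small, hence $0<\gamma<\pi/2$ and Theorem~\ref{thm:mainAB} applies. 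Using the congruence invariance (property~\ref{item:M2} of Theorem~\ref{thm:properties}) with a Cholesky factor of $M$, both the formula~\eqref{eq:maing} and its claimed limit transform consistently, so I may assume $M=I$; then $A(\tau)=\omega_0 I+\tau N_0'$, $B(\tau)=\omega_1 I+\tau N_1'$ and $A(\tau)^{-1}B(\tau)\to(\omega_1/\omega_0)I$ as $\tau\to 0$.

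Next I would collect the limits of the ingredients of~\eqref{eq:maing}. Since the eigenvalues of $A(\tau)^{-1}B(\tau)$ all tend to $\omega_1/\omega_0$, continuity gives $\sigma(\tau)=\det(A(\tau)^{-1}B(\tau))^{\beta/2}\to\sigma_0:=(\omega_1/\omega_0)^{n\beta/2}$ and $A(\tau)\,(A(\tau)^{-1}B(\tau))^{\alpha(\tau,t)}\to \omega_0(\omega_1/\omega_0)^{\alpha_0(t)}\,I=\omega_0^{1-\alpha_0(t)}\omega_1^{\alpha_0(t)}I$, once $\alpha(\tau,t)$ is shown to converge to some $\alpha_0(t)$. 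Moreover $\gamma(\tau)\to 0$: by~\eqref{eq:gamma}, $\gamma$ is a fixed positive multiple of $\delta(\wt A,\wt B)$, and $\wt A(\tau)^{-1}\wt B(\tau)\to I$ because the determinant-one normalizations become equal as $A(\tau)$ and $B(\tau)$ become proportional.

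The hard part will be the limit of $\alpha(\tau,t)=\gamma^{-1}\arctan\!\bigl(t\sigma\sin\gamma/(1-t+t\sigma\cos\gamma)\bigr)$, which is an indeterminate $0/0$ form as $\gamma\to 0$; this is the only step requiring genuine care. I would resolve it with the expansions $\sin\gamma=\gamma+o(\gamma)$, $\cos\gamma=1+o(1)$ and $\arctan x=x+o(x)$, together with $\sigma\to\sigma_0$, which yield
\[
\alpha(\tau,t)\;\longrightarrow\;\alpha_0(t):=\frac{t\sigma_0}{1-t+t\sigma_0}.
\]
With $\gamma\to 0$ and $\sigma\to\sigma_0$ now plugged into~\eqref{eq:alphaeta}, no indeterminacy remains and $\eta(\tau,t)\to\eta_0(t):=\bigl((1-t+t\sigma_0)^2/\sigma_0^{2\alpha_0(t)}\bigr)^{1/(n\beta)}$ by continuity.

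Finally I would assemble the factors, obtaining $\lim_{\tau\to 0}G_\beta(A(\tau),B(\tau),t)=\eta_0(t)\,\omega_0^{1-\alpha_0(t)}\omega_1^{\alpha_0(t)}I$, and verify the elementary identity that this equals $\bigl((1-t)\omega_0^{n\beta/2}+t\omega_1^{n\beta/2}\bigr)^{2/(n\beta)}I$. Writing $u_j=\omega_j^{n\beta/2}$, so that $\sigma_0=u_1/u_0$ and $\omega_1/\omega_0=\sigma_0^{2/(n\beta)}$, the powers of $\sigma_0^{\pm\alpha_0(t)}$ cancel between $\eta_0(t)$ and $\omega_0^{1-\alpha_0(t)}\omega_1^{\alpha_0(t)}$, and $u_0(1-t+t\sigma_0)=(1-t)u_0+tu_1$, which reproduces exactly the linearly dependent value of~\eqref{eq:maing2}. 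Undoing the congruence reduction to $M=I$ then gives $G_\beta(A(0),B(0),t)$ and completes the argument; apart from the arctangent limit, everything reduces to continuity of eigenvalues and of matrix powers plus a short algebraic simplification.
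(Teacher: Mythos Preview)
Your argument is correct and follows essentially the same route as the paper: reduce to $M=I$ by congruence invariance, show $\gamma(\tau)\to 0$ so that the explicit formula~\eqref{eq:maing} applies for small $\tau\ne 0$, resolve the $0/0$ indeterminacy in $\alpha$ via first-order expansions to obtain $\alpha_0(t)=t\sigma_0/(1-t+t\sigma_0)$, pass to the limit in $\eta$ and in $A\#_\alpha B$, and then check algebraically that the result matches the linearly dependent expression~\eqref{eq:maing2}. The only cosmetic difference is that the paper also normalizes $\omega_0=1$ and records the explicit first-order $\tau$-expansions of $\sigma$, $\wt\mu_i$, $\gamma$ and $A\#_\alpha B$, whereas you work directly with limits; your version is slightly more economical but the underlying computation is the same.
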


\begin{proof}
There exist continuous functions $\wt \mu_1(\tau),\ldots,\wt \mu_n(\tau)$ that are the eigenvalues of $\wt A(\tau)^{-1}\wt B(\tau)$, where as usual $\wt A(\tau)=\det(A(\tau))^{-1/n}A(\tau)$, $\wt B(\tau)=\det(B(\tau))^{-1/n}B(\tau)$.
Hence, the function 
\[
\gamma(\tau):=\gamma_\beta(A(\tau),B(\tau)) = \frac{|\beta|}{2\sqrt{1/n-\beta}}\Bigl(\sum_i \ln^2\wt \mu_i(\tau)\Bigr)^{1/2}
\]
is continuous and $\lim_{\tau\to 0}\gamma(\tau)=0$, since $A(0)$ and $B(0)$ are linearly dependent. There exists a neighborhood of $0$ such that $\gamma(\tau)<\pi/2$ and the function $G_\beta(A(\tau),B(\tau),t)$ is well defined in this neighborhood.

Since the mean commutes with congruences, without loss of generality, we can assume that $M=I$ and $\omega_0=1$. We have that 
\[
\sigma(\tau) = \det(A(\tau)^{-1}B(\tau))^{\beta/2},
\]
is continuous and $\lim_{\tau \to 0}\sigma(\tau)=\omega_1^{n\beta/2} =:\omega$.

As $\tau$ approaches 0, we have the power series expansions
\[
\sigma(\tau) = \omega + \frac{\omega \beta}2 \trace(N) \tau  + o(\tau),\qquad \wt \mu_i(\tau) = 1 + \wh \mu_i \tau + o(\tau), 
\]
where $N:=\frac{N_1}{\omega_1}-N_0$ and $\wh \mu_i$ are the eigenvalues of $N-\frac{1}{n}\trace(N)I$, and moreover
\[
\gamma(\tau) = \frac{|\beta|}{2\sqrt{1/n-\beta}}\bigl\| N-\trace(N)I/n\bigr\|_F\tau+o(\tau),
\]
\[
\alpha=\frac{t\omega}{1-t+t\omega}+o(1).
\]
Thus
\[
\bigl((1-t)^2+2t(1-t)\sigma\cos\gamma+t^2\sigma^2\bigr)^{\frac{2}{n\beta}}=(1-t+t\omega_1^{\frac{n\beta}{2}})^{\frac{2}{n\beta}}+o(1),\qquad \sigma^{\frac{2\alpha}{n\beta}} =\omega_1^\alpha+o(1),
\]
and
\begin{align*}
A\#_\alpha B &=(I+\tau N_0)\bigl((I+\tau N_0)^{-1}(\omega_1 I+\tau N_1)\bigr)^\alpha
=(I+\tau N_0)(\omega_1 I+\tau (N_1-\omega_1 N_0)+o(\tau))^\alpha \\
&=\omega_1^\alpha(I+\tau N_0+\tau\alpha N+o(\tau)),
\end{align*}
from which we finally get
\[
\lim_{\tau \to 0} G_\beta(A(\tau),B(\tau),t)=(1-t+t\omega_1^{\frac{n\beta}{2}})^{\frac{2}{n\beta}}I=G_\beta(I,\omega_1 I).
\]
By reverting the congruence, the result follows.
\end{proof}

\section{Riemannian distance} \label{sec:dist}

We give an explicit expression for the Riemannian distance in $\Mb$, in terms of the determinants of $A$ and $B$ and the classical Riemannian distance in $\mathcal{M}_{n}^0$. 

\begin{theorem}
The distance associated with the Riemannian metric $\Mb$ between two positive-definite matrices $A$ and $B$, such that $0<\gamma <\pi/2$, with $\gamma:=\gamma_\beta(A,B)$ of \eqref{eq:gamma}, is given by
\begin{equation} \label{eq:beta-dist}
\dis_{\beta}(A, B) = \frac{2 \sqrt{1/n - \beta}}{|\beta|} \Bigl( \left(\det(A)^{\beta/2} - \det(B)^{\beta/2} \right)^2 + 4(\det(A) \det(B))^{\beta/2} \sin^2 \frac{\gamma}{2} \Bigr)^{1/2}.
\end{equation}
\end{theorem}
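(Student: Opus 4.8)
The plan is to obtain the distance as the length of the unique geodesic $G_\beta(A,B,\cdot)$ provided by Theorem~\ref{thm:mainAB}, exploiting that a geodesic is parametrized with constant speed. Thus $\dis_\beta(A,B)=\sqrt{g^\beta_{G_\beta(A,B,t)}\bigl(G_\beta'(A,B,t),G_\beta'(A,B,t)\bigr)}$ is independent of $t\in[0,1]$, and I would evaluate it at the endpoint $t=0$. The constant-speed property can be checked directly from the geodesic system \eqref{eq:GeodEq2}: writing $G=P^{-1}P'$, $\alpha=\frac1n\trace(G)$ and $s=\trace(\wt G^2)$, the speed squared equals $(\det P)^\beta\bigl(n(1-n\beta)\alpha^2+s\bigr)$, and differentiating while using \eqref{eq:GeodEq2} together with $s'=-2n\beta\alpha s$ and $\bigl((\det P)^\beta\bigr)'=n\beta\alpha(\det P)^\beta$ shows this quantity is conserved.

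Next I would reduce to a diagonal endpoint exactly as in Section~\ref{sec:diag}. Choosing $M$ invertible with $M^TAM=I$ and $M^TBM=D$ diagonal, the length rescales by $|\det M|^\beta=\det(A)^{-\beta/2}$, so that $\dis_\beta(A,B)=\det(A)^{\beta/2}\,\dis_\beta(I,D)$; moreover $\gamma$ is congruence invariant by Lemma~\ref{thm:gamma}, whence $\gamma_\beta(I,D)=\gamma_\beta(A,B)=\gamma$. It therefore suffices to compute $\dis_\beta(I,D)$ along the diagonal geodesic $\wt P(t)=\diag(\lambda_1(t),\dots,\lambda_n(t))$ of Corollary~\ref{thm:cor}, with $\lambda_i(0)=1$, $\lambda_i(1)=d_i$ and $\sigma=\det(D)^{\beta/2}$.

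The core computation is the speed at $t=0$. Since $\wt P$ is diagonal, $G=\wt P^{-1}\wt P'$ is diagonal with entries $\ell_i':=(\ln\lambda_i)'$, and (as $\det\wt P(0)=1$) the speed squared at $0$ is $\sum_i\ell_i'(0)^2-\beta\bigl(\sum_i\ell_i'(0)\bigr)^2$. Differentiating the explicit expression \eqref{eq:scalar} at $t=0$, using $\alpha(0)=0$, $\alpha'(0)=\frac{\sigma\sin\gamma}{\gamma}$ and the derivative of the quadratic $(1-t)^2+\sigma^2t^2+2\sigma t(1-t)\cos\gamma$ at $0$, I expect the two $\ln\sigma$ contributions to cancel and leave the clean affine form $\ell_i'(0)=a+b\zeta_i$ with $a=\frac{2(\sigma\cos\gamma-1)}{n\beta}$ and $b=\frac{\sigma\sin\gamma}{\gamma}$; confirming this cancellation is the main technical point. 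Using $\sum_i\zeta_i=0$ (because $\det(\wt A^{-1}\wt B)=1$) and $\|\zeta\|^2=\frac{4(1/n-\beta)}{\beta^2}\gamma^2$ from the definition \eqref{eq:gamma}, the cross terms drop out and the speed squared collapses to $\frac{4(1-n\beta)}{n\beta^2}\bigl(\sigma^2-2\sigma\cos\gamma+1\bigr)$; rewriting $\sigma^2-2\sigma\cos\gamma+1=(\sigma-1)^2+4\sigma\sin^2\frac\gamma2$ gives $\dis_\beta(I,D)=\frac{2\sqrt{1/n-\beta}}{|\beta|}\bigl((\sigma-1)^2+4\sigma\sin^2\frac\gamma2\bigr)^{1/2}$.

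Finally I would reinstate the scaling factor. Since $\det D=\det(A^{-1}B)$, we have $\sigma=(\det B/\det A)^{\beta/2}$, so $\det(A)^{\beta/2}(\sigma-1)=\det(B)^{\beta/2}-\det(A)^{\beta/2}$ and $\det(A)^{\beta}\sigma=(\det A\det B)^{\beta/2}$. Multiplying $\dis_\beta(I,D)$ by $\det(A)^{\beta/2}$ and distributing it under the square root then produces precisely \eqref{eq:beta-dist}. The only genuinely delicate steps are the clean affine reduction of $\ell_i'(0)$ (the cancellation of the $\ln\sigma$ terms) and the careful bookkeeping of the determinant factors; the remainder is substitution and trigonometric rewriting.
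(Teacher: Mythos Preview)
Your argument is correct and arrives at \eqref{eq:beta-dist} by essentially the same overarching idea as the paper---compute the (constant) speed of the geodesic---but via a different computational route. The paper works entirely inside the $h(t)=(1-t)\phi+t\psi$ parametrization from the appendix proof of Lemma~\ref{thm:lemma}: it reads off $\trace G(t)$, $\trace G(t)^2$ and $\det P(t)^\beta$ from \eqref{eq:Gi}--\eqref{eq:diag-geod}, finds the speed squared equals the constant $4\det(A)^\beta(1/n-\beta)(\psi-\phi)^2/\bigl(\beta^2(1+\phi^2)\bigr)$, and then substitutes the explicit values \eqref{eq:phi-psi} of $\phi,\psi$. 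You instead verify constant speed abstractly from the ODE system~\eqref{eq:GeodEq2} and then evaluate once at $t=0$ using the closed form of Corollary~\ref{thm:cor}. Your route is slightly more self-contained (it does not reach back into the appendix) and the endpoint evaluation is pleasantly short; the paper's route has the side benefit of giving $\det P(t)^\beta$ along the whole curve for free. One wording quibble: what you call the ``cancellation of the $\ln\sigma$ terms'' is really a recombination---since $\tfrac{2}{n\beta}\ln\sigma=\tfrac1n\ln\det D$, the term $-\tfrac{2\alpha'(0)}{n\beta}\ln\sigma$ merges with $\alpha'(0)\ln d_i$ to give $\alpha'(0)\zeta_i$---but the outcome $\ell_i'(0)=\tfrac{2(\sigma\cos\gamma-1)}{n\beta}+\tfrac{\sigma\sin\gamma}{\gamma}\,\zeta_i$ is exactly right.
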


\begin{proof}
We rely on the proof of Lemma \ref{thm:lemma} to get some useful expressions involving the geodesics $P(t)$ and $G(t)=P(t)^{-1}P'(t)$.

Using~\eqref{eq:Gi} and the fact that $\sum_{i=1}^n a_i = 0$, we obtain
\[
\trace (G(t)) = \frac{2 h(t) h'(t)}{\beta(1 + h(t)^2)},
\]
where $h(t) = (1-t) \phi + t \psi$, with $\phi$ and $\psi$ as in \eqref{eq:phi-psi}. Furthermore, by noting that $\sum_{i=1}^n a_i^2=2n(1-n\beta)a^2$, it follows from~\eqref{eq:Gi} that
\[
\trace (G(t)^2) = \frac{4(\phi - \psi)^2}{n\beta^2(1 + h(t)^2)^2} (h(t)^2 + 1 - n \beta).
\]
From equation \eqref{eq:diag-geod}, we obtain that
\begin{align} \label{eq:detP^beta}
\det(P(t))^\beta &= \prod_{i=1}^n \lambda_i(0)^\beta \Bigl(\frac{1+h(t)^2}{1+\phi^2}\Bigr)^{1/n}\exp\bigl(b_i(\arctan(h(t))-\arctan\phi)\bigl) \nonumber \\
& =\det(A)^\beta\frac{1+h(t)^2}{1 + \phi^2},
\end{align}
where we have used the fact that $\sum_i b_i=0$.

The ``square of the speed'' of the geodesic curve $P(t)$ is given by 
\begin{equation} \label{eq:dist1}
g_{P}^\beta(P',P') = \det(P(t))^{\beta} \left[\trace(G(t)^2) - \beta \trace^2(G(t)) \right] = 4\frac{\det(A)^\beta}{1+\phi^2}\frac{1/n - \beta}{\beta^2}(\psi - \phi)^2.
\end{equation}
The latter is constant and hence $t$ is the arclength of $P(t)$. Using the expressions for $\phi$ and $\psi$, we obtain the identity
\[
\det(A)^\beta\frac{(\psi-\phi)^2}{1+\phi^2} = (\det(A)\det(B))^{\beta/2}\bigl(\sigma+\frac{1}{\sigma}-2\cos\gamma\bigr),
\]
that in turn, as $\sigma=\det(B)^{\beta/2}/\det(A)^{\beta/2}$, provides the formula for the distance

\begin{equation} \label{eq:dist0}
\begin{split}
\dis_{\beta}(A, B)^2
&= \frac{4(1/n - \beta)}{\beta^2} \left(\det(A)^\beta + \det(B)^\beta - 2(\det(A) \det(B))^{\beta/2} \cos \gamma \right), \\
&= \frac{4(1/n - \beta)}{\beta^2} \left( \left(\det(A)^{\beta/2} - \det(B)^{\beta/2} \right)^2 + 2(\det(A) \det(B))^{\beta/2} (1 - \cos \gamma) \right) \\
&= \frac{4(1/n - \beta)}{\beta^2} \left( \left(\det(A)^{\beta/2} - \det(B)^{\beta/2} \right)^2 + 4(\det(A) \det(B))^{\beta/2} \sin^2 \frac{\gamma}{2} \right).
\end{split}
\end{equation}
\end{proof}

The explicit formula \eqref{eq:beta-dist} involves the dimension $n$, the parameter $\beta$, the determinants of $A$ and $B$ and the classical Riemannian distance in $\mathcal{M}_n^{0}$ between $A$ and $B$, through $\gamma$, see Fig.~\ref{fig:triangle2} for a geometric interpretation.

\begin{center}
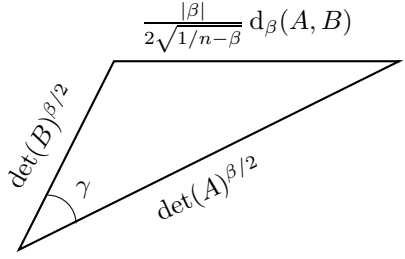
\begin{figure}[ht] 
\begin{center}
\begin{tikzpicture}[scale=5]
\def\t{33.7}
\def\s{60}  
\def\theta{25}
\draw[thick] (0,0) -- (1,0.5) -- (0.25,0.5) -- cycle;
\draw (0.15,0.07) arc (0:90:0.075);
\node[cm={cos(\theta), sin(\theta), -sin(\theta), cos(\theta), (0,0)}] at (0.5,0.16) {\footnotesize $\det(A)^{\beta/2}$};
\node[cm={cos(\s), sin(\s), -sin(\s), cos(\s),(0,0)}] at (0.06,0.3) {\footnotesize $\det(B)^{\beta/2}$};
\node at (0.6,0.59) {\footnotesize $\frac{|\beta|}{2\sqrt{1/n-\beta}} \dis_\beta(A,B)$};
\node at (0.17,0.16) {\footnotesize $\gamma$};
\end{tikzpicture}
\end{center}
\caption{Geometric interpretation of $\dis_\beta(A, B)$.}
\label{fig:triangle2}
\end{figure}
\end{center}

Note that when $\det A = \det B = \Delta$, then~\eqref{eq:beta-dist} reduces to
\[
\dis_{\beta}(A, B) = \frac{4\sqrt{1/n - \beta} }{|\beta|} \Delta^{\beta/2}\sin \frac{\gamma}{2}.
\]

As $\beta\to 0$, we have that $\gamma=\frac{1}{2}|\beta|\sqrt{n}\delta(\wt A,\wt B)+o(|\beta|)$ that yields the expansion
\[	
\dis_\beta(A,B)^2=\frac{4}{n\beta^2}\Bigl(\frac{\beta^2}{4}(\log\det(A)-\log\det(B))^2-
\frac{n\beta^2}{4}\delta(\wt A,\wt B)^2+o(\beta^2)\Bigr)=\delta(A,B)^2+o(1),
\]
that gives
\[
\lim_{\beta \to 0} \dis_{\beta}(A, B) = \delta(A, B),
\]
and the new distance tends to the Riemannian distance between $A$ and $B$.

Finally, we give in the following lemma an expression for the determinant of the point $\det P(t)$ along the geodesics as a function of $\det(A)$ and $\det(B)$.

\begin{lemma}
Along the geodesic curve $P(t)$ joining $A$ and $B$, the determinant $\Delta(t):= \det P(t)$ is given by
\begin{align*}
\Delta(t)^{\beta} &= (1 - t)^2 \det(A)^\beta + t^2 \det(B)^\beta + 2t(1 - t) \det(A)^{\beta/2} \det(B)^{\beta/2} \cos \gamma \\
&= \left( (1 - t) \det(A)^{\beta/2} + t \det(B)^{\beta/2} \right)^2 - 4t(1 - t) \det(A)^{\beta/2} \det(B)^{\beta/2} \sin^2 \frac{\gamma}2. 
\end{align*}
\end{lemma}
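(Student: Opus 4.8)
The plan is to compute the determinant directly from the closed-form geodesic of Theorem~\ref{thm:mainAB}, where $P(t) = G_\beta(A,B,t)$. Since the linearly dependent case corresponds to $\gamma = 0$, in which case $\cos\gamma = 1$ and $\sin(\gamma/2) = 0$ so that both claimed expressions collapse to the perfect square $\bigl((1-t)\det(A)^{\beta/2}+t\det(B)^{\beta/2}\bigr)^2$, I would first dispose of that case by taking determinants in~\eqref{eq:maing2}, and then treat the generic linearly independent case using~\eqref{eq:maing} in detail; continuity (Theorem~\ref{thm:gamma0}) confirms the two regimes match.

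First I would write $P(t) = \eta(t)A(A^{-1}B)^{\alpha(t)}$ and take determinants, using multiplicativity together with $\det(C^s) = (\det C)^s$ for the primary matrix power. This gives
\[
\det P(t) = \eta(t)^n \det(A)\,\det(A^{-1}B)^{\alpha(t)} = \eta(t)^n \det(A)^{1-\alpha(t)}\det(B)^{\alpha(t)}.
\]
Next I would substitute the explicit $\eta(t)$ from~\eqref{eq:alphaeta}, raise to the power $\beta$ to clear the $1/(n\beta)$ exponent, and observe that $\eta(t)^{n\beta}$ equals $\bigl((1-t)^2+2t(1-t)\sigma\cos\gamma+t^2\sigma^2\bigr)\sigma^{-2\alpha(t)}$.

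The crux is the cancellation of the $\alpha(t)$-dependence: since $\sigma = (\det B/\det A)^{\beta/2}$, the factor $\sigma^{2\alpha(t)}$ is precisely $(\det B/\det A)^{\beta\alpha(t)}$, which combines with $\det(A)^{\beta(1-\alpha(t))}\det(B)^{\beta\alpha(t)}$ to leave simply $\det(A)^\beta$. This is the only step that requires care, and it is what removes all trace of $\alpha(t)$ (hence of the $\arctan$ function) from the final answer, leaving
\[
\det P(t)^\beta = \bigl((1-t)^2+2t(1-t)\sigma\cos\gamma+t^2\sigma^2\bigr)\det(A)^\beta.
\]

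Finally, substituting $\sigma = \det(B)^{\beta/2}/\det(A)^{\beta/2}$ distributes the three terms into exactly the first claimed expression, namely $(1-t)^2\det(A)^\beta + t^2\det(B)^\beta + 2t(1-t)\det(A)^{\beta/2}\det(B)^{\beta/2}\cos\gamma$. Applying $\cos\gamma = 1 - 2\sin^2(\gamma/2)$ and recognizing the perfect square $(1-t)^2\det(A)^\beta + 2t(1-t)\det(A)^{\beta/2}\det(B)^{\beta/2} + t^2\det(B)^\beta = \bigl((1-t)\det(A)^{\beta/2}+t\det(B)^{\beta/2}\bigr)^2$ then yields the second expression. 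The computation is otherwise routine, and I anticipate no genuine obstacle beyond tracking this cancellation.
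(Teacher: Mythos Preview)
Your proposal is correct, and the computation goes through exactly as you describe: the $\alpha(t)$-dependence cancels between $\eta(t)^{n\beta}$ and $\det(A)^{\beta(1-\alpha(t))}\det(B)^{\beta\alpha(t)}$, leaving the clean quadratic expression in $(1-t)$ and $t\sigma$.

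However, your route differs from the paper's. You compute $\Delta(t)^\beta$ by taking the determinant of the explicit closed-form geodesic~\eqref{eq:maing} and simplifying algebraically. The paper instead derives a second-order ODE for $\Delta(t)$: starting from Jacobi's formula $\Delta'/\Delta = \trace(G)$ and the constancy of the geodesic speed $g_P^\beta(P',P') = D^2$, one obtains $\trace(G^2) = D^2\Delta^{-\beta} + \beta(\Delta'/\Delta)^2$; differentiating Jacobi's formula and substituting~\eqref{eq:ode-trG} then yields
\[
\Delta''\Delta + (\beta-1)(\Delta')^2 = \frac{n\beta D^2}{2(1-n\beta)}\Delta^{2-\beta},
\]
which is integrated with the boundary values $\Delta(0)=\det A$, $\Delta(1)=\det B$. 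Your approach is more elementary, being purely algebraic once Theorem~\ref{thm:mainAB} is in hand, and it makes the cancellation of $\alpha(t)$ transparent. The paper's approach, by contrast, works directly from the geodesic equation and the constant-speed property, so it does not rely on the explicit form of $\eta(t)$ or $\alpha(t)$; this is conceptually cleaner but leaves the actual integration of the ODE as a step to be filled in.
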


\begin{proof}
Along the geodesic curve $P(t)$ joining $A$ and $B$, we know that the ``square of the speed'',
\[
g_{P(t)}^{\beta}(G(t), G(t)) = \det (P(t))^\beta \left( \trace(G(t)^2) - \beta \trace^2 G(t) \right), 
\]
is constant and is equal to $D^2 := \dis_\beta^2(A,B)$, the square of the $\beta$-Riemannian distance between $A$ and $B$.

Therefore, by using Jacobi's formula, $\Delta'(t) / \Delta(t) = \trace(G(t))$, and the above we obtain
\[
\trace(G(t)^2) = D^2 \Delta(t)^{-\beta} + \beta \left(\frac{\Delta'(t)}{\Delta(t)}\right)^2.  
\]
Furthermore, upon derivation of Jacobi's formula we get
\[
\left(\frac{\Delta'(t)}{\Delta(t)} \right)' = \trace(G'(t)),
\]
which, with the help of \eqref{eq:ode-trG}, can be written as
\[
\left(\frac{\Delta'(t)}{\Delta(t)} \right)' = \frac{n \beta D^2}{2(1-n\beta)} \Delta(t)^{-\beta} - \beta \left(\frac{\Delta'(t)}{\Delta(t)} \right)^2,
\]
or, equivalently,
\[
\Delta(t)'' \Delta(t) + (\beta - 1) (\Delta(t)')^2 = \frac{n \beta D^2}{2(1-n\beta)} \Delta(t)^{2-\beta}.
\]
By integrating this second-order ODE subject to the conditions $\Delta(0) = \det A$ and $\Delta(1) = \det B$ we get the following expression for the determinant $\Delta(t)$:
\[
\Delta(t)^{\beta} = (1 - t) \det(A)^\beta + t \det(B)^\beta - t(1 - t) \frac{n D^2 \beta^2}{4(1 - n \beta)}.
\]
The result then follows by substituting the expression of $D^2$.
\end{proof}

When $A$ and $B$ are linearly dependent the above reduces to
\begin{equation*}
\Delta(t)^{\beta} = \left[ (1-t)\det(A)^{\beta/2} + t \det(B)^{\beta/2} \right]^2.
\end{equation*}

\section{Conclusions}\label{sec:conc}

Using the Hessian of the power potential function $(1-\det(X)^\beta)/\beta$ we have derived a Riemannian metric on the cone of positive definite matrices. We were able to find an explicit expression for the geodesics joining two matrices, when $\beta$ is sufficiently small or when the two matrices are sufficiently near to a couple of linearly dependent matrices.

The geodesic has been interpreted as a power mean of positive definite matrices, because of its properties, while it is different from the power means defined in the literature so far, and thus it represents a new mathematical object. 

Some open problems regarding the new geometry have been left: it is not clear how the new power mean interplay with the ordering of positive definite matrices, i.e. monotonicity. Moreover, extensive numerical tests suggest that the new geometry yields a nonpositive curvature, while a formal proof of this property is still lacking.

Finally, the relation of the new mean with information geometry and Tsallis statistics make it potentially useful in application where data matrices need to be averaged. An implementation of the mean can be found at MATLAB Central repository \url{https://tinyurl.com/tzkv3sqh}

Investigating the scope of applications of the new mean and extending this mean to more than two matrices is an object of future work.

\appendix

\renewcommand{\theequation}{A.\arabic{equation}}

\section*{Appendix A. Proof of Lemma \ref{thm:lemma}}\label{sec:app1}

\begin{proof}
The set of positive-definite diagonal matrices is a totally-geodesic submanifold of $\Mb$ (see Corollary \ref{thm:tot}), thus, we can assume that the geodesic $P(t)$ we are looking for is diagonal. 

Under the hypotheses on $\gamma$, we will solve equation~\eqref{eq:GeodEq}, with $G(t)$ diagonal for $t\in[0,1]$ and with suited boundary conditions.

Let $P(t) = \diag(\lambda_1, \ldots, \lambda_n)$ and $G(t) = \diag(\gamma_1, \ldots, \gamma_n)$, where $\gamma_i = {\lambda'_i}/{\lambda_i}$. Furthermore, set $\alpha(t)=\frac{1}{n}\trace(G(t))$ and $\wt{G}(t)=G(t)-\frac{1}{n}\trace(G(t))I=\diag(\nu_1, \ldots, \nu_{n})$ where $\nu_i = \gamma_i - \alpha$, $i=1, \ldots, n$. Observing that  $\nu_1 + \cdots + \nu_n=0$, the boundary-value problem to be solved can be written as (see~\eqref{eq:GeodEq2})
\begin{equation} \label{eq:GeodEq3}
\left\{\begin{array}{l} 
 \alpha' = - n \beta \left( \frac{\alpha^2}2 - \frac{1}{n(1 - n \beta)} \sum_{i=1}^{n-1} \left(\nu_i^2 + \sum_{j<i} \nu_i \nu_j \right) \right), \\
 \nu_i' =  - n \beta \alpha \nu_i, \quad i=1,\ldots, n-1,\\
 \lambda'_i/\lambda_i = \nu_i+\alpha,\quad i=1,\ldots,n,\\
 \lambda_i(0)=a_{ii},\quad \lambda_i(1)=b_{ii},\qquad i=1,\ldots,n. 
\end{array}\right.
\end{equation}
The key observation for getting an explicit solution of the above system is to consider the corresponding initial-value problem in a right neighborhood of $0$ and extend the solution so that it satisfies the boundary conditions. 

The case in which $\nu_i(0)=0$ for each $i$ can be ruled out by observing that, setting $\nu_i(t)\equiv 0$, the solution of \eqref{eq:GeodEq3} is obtained by
\[
\left\{
\begin{array}{l}
\alpha'=-\frac{n\beta}{2}\alpha^2,\\
\lambda'_i/\lambda_i = \alpha,\qquad i=1,\ldots,n,\\
\lambda_i(0)=a_{ii},\quad \lambda_i(1)=b_{ii},\qquad i=1,\ldots,n,
\end{array}
\right.
\]
whose solution is $\lambda_i(t) = \Bigl( (1 - t) a_{ii}^{\frac{n\beta}2} + t b_{ii}^{\frac{n\beta}2} \Bigr)^{\frac{2}{n\beta}}$ (see Section \ref{sec:special}). The condition $\lambda'_i/\lambda_i=\alpha$ for $i=1,\ldots,n$, implies that the quotient $b_{ii}/a_{ii}$ is constant and thus $D_A$ and $D_B$ are linearly dependent, that implies $\gamma=0$.

Without loss of generality, from now on, we assume that $\nu_\ell(0)\neq 0$ for some $\ell$. In a right neighborhood of $0$, for the indices such that $\nu_i(0)\ne 0$, we can write
\begin{equation} \label{eq:nu'/nu}
\frac{\nu_i'}{\nu_i} = \frac{\nu_\ell'}{\nu_\ell},
\end{equation}
that yields $\nu_i=a_i\nu_\ell$, for some constant $a_i$ obtained by integrating~\eqref{eq:nu'/nu}. In general, setting $a_i=0$ when $\nu_i(0)=0$, we can write
\[
\nu_i = a_i \nu_\ell, \quad 1 \le i \le n,
\]
where $a_\ell=1$ and $a_n=-\sum_{i=1}^{n-1}a_i$. By the uniqueness of solution of the initial value problem, we can further assume that $\nu_\ell(t)\ne 0$ for any $t$ in the domain of definition,  and moreover, that $\nu_\ell(t)>0$, since $\sum_i\nu_i=0$.

This reduces the problem to the system of two ordinary differential equations
\begin{equation} \label{eq:GeodEq4}
\begin{cases} 
\alpha' = - n \beta \left( \frac{\alpha^2}2 - \wh a \nu_\ell^2 \right), \\
\nu_\ell' = - n \beta \alpha \nu_\ell,
\end{cases}
\end{equation}
where
\[
\wh a = \frac{1}{2n(1 - n \beta)} \sum_{i=1}^n a_i^2.
\]

The solution of~\eqref{eq:GeodEq4} can be expressed as
\begin{subequations}
\begin{align}
& \alpha(t) = \frac1{n \beta} \frac{2 h(t) h'(t)}{1 + h(t)^2}, \\
& \nu_\ell(t) = \frac{\sqrt{2}}{n \beta a} \frac{h'(t)}{1 + h(t)^2},
\end{align}
\end{subequations}
where $h(t) = (1 - t) \phi + t \psi$ with $\phi$ and $\psi$ are integrating constants, while $a$ is one of the square roots of $\wh a$. Observe that we have assumed that $\nu_\ell(0)>0$ and this implies that $\psi\ne \phi$, and $\sign(\psi-\phi)\sign(a)=\sign(\beta)$, that will be assumed from now on.

It follows that the solution of the initial-value problem associated with~\eqref{eq:GeodEq3} is obtained by integrating
\begin{equation} \label{eq:Gi}
\frac{\lambda'_i}{\lambda_i} = \frac1{n\beta} \left(\frac{2 h(t) h'(t)}{1 + h(t)^2} +  \frac{\sqrt{2} a_i}{a} \frac{h'(t)}{1 + h(t)^2} \right),  \qquad i=1, \ldots, n,
\end{equation}
which yields the following formula for the geodesic curve
\begin{equation} \label{eq:diag-geod}
\lambda_i^{n\beta}(t) = \lambda_i^{n\beta}(0) \frac{1 + h(t)^2}{1 + \phi^2} \exp(b_i(\arctan h(t) - \arctan \phi)),
\end{equation}
for $i=1,\ldots,n$, where $b_i=\sqrt{2}a_i/a$ and $\lambda_i(0)=a_{ii}$. Notice that $\sum_{i=1}^n b_i=0$ and $b_\ell=\sqrt{2}/a$.

The constants ($\phi$, $\psi$, $b_1$, \ldots, $b_{\ell-1}$, $b_{\ell+1}$, \ldots, $b_n$) are to be determined so that the $n$ end conditions ($\lambda_i(1) = b_{ii}$, $i=1, \ldots, n$) are satisfied, together with $\sum_i b_i=0$.

Since $\nu_i(t)=\frac{\lambda_i'}{\lambda_i}-\frac{1}{n}\sum_{j=1}^n \frac{\lambda_j'}{\lambda_j}$, we have that, for $t$ in a right neighborhood of $0$,
\[
\int_0^{t}\nu_i(s)ds=\ln\frac{\lambda_i(t)}{\lambda_i(0)}-\frac{1}{n}\det(P(0)^{-1}P(t)).
\]
If we assume that the solution exists for $t=1$, we get
\[
\int_0^1\nu_i(s)ds=\ln\mu_i-\frac{1}{n}\ln\det(D_A^{-1}D_B)=\zeta_i,
\]
and since $\nu_i(t)=a_i\nu_\ell(t)$, we have that $a_i=\zeta_i/\zeta_\ell$, where $\zeta_i$ is defined in \eqref{eq:zeta}, and thus $b_i=\sign(a)2\sqrt{n(1-n\beta)}\zeta_i/\|\zeta\|$. Recall that we have assumed that $\nu_\ell(t)>0$ for each $t$, from which it follows that $\zeta_\ell>0$.

It remains to determine $\psi$ and $\phi$. From~\eqref{eq:diag-geod} it follows that, in order to have $\lambda_i(1)=b_{ii}$, it must be
\[
b_{ii}^{n\beta}=a_{ii}^{n \beta}\frac{1+\psi^2}{1+\phi^2}\exp\bigl(b_i(\arctan\psi-\arctan{\phi})\bigr).
\]
Therefore,
\begin{equation} \label{eq:bi}
b_i (\arctan \psi - \arctan \phi)=\beta \ln\Bigl(\frac{b_{ii}^n}{a_{ii}^n} \frac{\det D_A}{\det D_B}\Bigr) = n \beta \zeta_i,
\end{equation}
and, by observing that $\sum_i b_i=0$ and $\sum_i b_i^2=4n(1-n\beta)$, we get the two nonlinear equations on the unknowns $\phi$ and $\psi$
\begin{equation} \label{eq:phi-psi0}
\frac{1+\psi^2}{1+\phi^2}=\biggl(\prod_{i=1}^n \frac{b_{ii}}{a_{ii}}\biggr)^{\beta}=\biggl(\frac{\det D_B}{\det D_A }\biggr)^{\beta},\quad
(\arctan \psi - \arctan \phi)^2 = \frac{n^2 \beta^2 \|\zeta\|^2}{4n(1-n\beta)}=\gamma^2.
\end{equation}

Notice that the latter equation has no solution for $\gamma\ge \pi$ from which we get the hypothesis of the theorem. For $\gamma\neq\pi/2$, we take square roots on the second equation and apply the tangent function obtaining the system
\begin{equation}\label{eq:314}
	1+\psi^2=\sigma^2(1+\phi^2),\qquad \frac{|\psi-\phi|}{1+\psi\phi}=\tan\gamma,
\end{equation}
where $\sigma=\det(D_A^{-1}D_B^{-1})^{\beta/2}$ and the second equality is obtained using the tangent summation formula 
\[
\tan(|\arctan\psi-\arctan\phi|)=|\psi-\phi|/(1+\psi\phi).
\]

The real solutions of system \eqref{eq:314} are 
\begin{equation} \label{eq:phi-psi}
\phi = \frac{\sigma^{-1}-\cos\gamma}{\sin\gamma},\qquad \psi = \frac{\cos\gamma-\sigma}{\sin\gamma}
\end{equation} 
and $(-\phi,-\psi)$ and these two couples solve the system \eqref{eq:phi-psi0}. Moreover, by direct inspection, one shows that they solve the system also for $\gamma=\pi/2$.

We know that $\sign(a)\sign(\psi-\phi)=\sign(\beta)$, thus choosing the sign of $a$ imposes the choice of the couple. But, since $\sign\bigl(\arctan(h(t))-\arctan\phi\bigr)=\sign(\psi-\phi)$, we note that in both cases plugging the solutions inside \eqref{eq:diag-geod} yields the same expression, and we conclude that the two choices are equivalent.

We note that the explicit expression~\eqref{eq:diag-geod} for the geodesic is not symmetric with respect to the end conditions $\lambda_i(0)$ and $\lambda_i(1)$. To obtain an expression which is symmetric with respect to the end conditions we recall that $h(t) = (1 - t) \phi + t \psi$, and hence,
\begin{align*}
1 + h^2(t) &= 1 + (1 - t)^2 \phi^2 + t^2 \psi^2 + 2t(1-t) \phi \psi \\
&= (1 - t)^2(1 + \phi^2) + t^2 (1 + \psi^2) + 2t(1-t) (1 + \phi \psi) \\
& = (1 - t) [ (1 - t) (1 + \phi^2) + t (1 + \phi \psi) ] + t [ t (1 + \psi^2) + (1 - t) (1 + \phi \psi) ].
\end{align*}
Therefore,
\[
\frac{1+h(t)^2}{1+\phi^2} = (1 - t) \left[ (1 - t) + t \frac{1 + \phi \psi}{1 + \phi^2} \right] + t \frac{1 + \psi^2}{1 + \phi^2} \left[ t + (1 - t) \frac{1 + \phi \psi}{1 + \psi^2} \right].
\]
From \eqref{eq:diag-geod} it follows
\begin{equation*} 
\lambda_i^{n\beta}(1) = \lambda_i^{n\beta}(0)\frac{1+\psi^2}{1+\phi^2} \exp(b_i(\arctan \psi -\arctan \phi)),
\end{equation*}
or, equivalently,
\begin{equation*} 
\lambda_i^{n\beta}(0) \frac{1+\psi^2}{1+\phi^2} = \lambda_i^{n\beta}(1) \exp(-b_i(\arctan \psi -\arctan \phi)).
\end{equation*}
Then, \eqref{eq:diag-geod} can be written as
\begin{align*}
\lambda_i^{n\beta}(t) &= \lambda_i^{n\beta}(0)  (1 - t) \left[ (1 - t) + t \frac{1 + \phi \psi}{1 + \phi^2} \right] \exp(b_i(\arctan h(t) - \arctan \phi)) \\
& \qquad + \lambda_i^{n\beta}(1)  t \sigma \left[ t + (1 - t) \frac{1 + \phi \psi}{1 + \psi^2} \right] \exp(b_i(\arctan h(t) - \arctan \psi)).
\end{align*}
In order to complete the proof, we use the arctangent summation formula, that for $x,y$ such that $x>0$, and $x>y$, can be written as
\[
\arctan x -\arctan y = \arctan\,2(x-y,1+xy).
\]
We will consider $\phi$ and $\psi$ as in \eqref{eq:phi-psi}, for which $\phi>0$ and $\psi<0$, and for $t\in(0,1)$ we have $\phi-h(t)=t(\phi-\psi)>0$ and $h(t)-\psi=(1-t)(\phi-\psi)>0$, and thus
\begin{align*}
& \arctan\phi-\arctan h(t) = \arctantwo(\phi-h(t),1+\phi h(t)), \\
& \arctan(-\psi)-\arctan (-h(t)) = \arctantwo(-\psi+h(t),1+\psi h(t)).
\end{align*}
Finally, using the expression of $b_i$ above,
\[
\begin{split}
b_i(\arctan h(t)-\arctan\phi) & =-\sign(a)2\sqrt{n(1-n\beta)}\frac{\zeta_i}{\|\zeta\|}(\arctan h(t)-\arctan\phi)\\
& = -\frac{n\beta}{\gamma}\sign(a)\sign(\beta)\zeta_i\arctantwo(\phi-h(t),1+\phi h(t))\\
& = \frac{n\beta}{\gamma}\zeta_i\arctantwo(t\sigma\sin\gamma,1-t+t\sigma\cos\gamma),
\end{split}
\]
where we have used the fact that $\sign(a)\sign(\beta)=\sign(\psi-\phi)=-1$. Analogously, 
\[
b_i(\arctan h(t)-\arctan\psi) = -\frac{n\beta}{\gamma}\zeta_i\arctantwo((1-t)\sin\gamma,(1-t)\cos\gamma+t\sigma),
\]
and simple manipulation yields the required expression for the geodesic.
\end{proof}


\begin{thebibliography}{10}
\expandafter\ifx\csname url\endcsname\relax
  \def\url#1{\texttt{#1}}\fi
\expandafter\ifx\csname urlprefix\endcsname\relax\def\urlprefix{URL }\fi
\expandafter\ifx\csname href\endcsname\relax
  \def\href#1#2{#2} \def\path#1{#1}\fi

\bibitem{ll}
J.~D. Lawson, Y.~Lim, The geometric mean, matrices, metrics, and more, Amer.
  Math. Monthly 108~(9) (2001) 797--812.

\bibitem{alm}
T.~Ando, C.-K. Li, R.~Mathias, Geometric means, Linear Algebra Appl. 385 (2004)
  305--334.

\bibitem{bhatia}
R.~Bhatia, J.~Holbrook, Riemannian geometry and matrix geometric means, Linear
  Algebra Appl. 413~(2-3) (2006) 594--618.

\bibitem{limpalfia}
Y.~Lim, M.~P\'{a}lfia, Matrix power means and the {K}archer mean, J. Funct.
  Anal. 262~(4) (2012) 1498--1514.

\bibitem{lang}
S.~Lang, Fundamentals of differential geometry, Vol. 191 of Graduate Texts in
  Mathematics, Springer-Verlag, New York, 1999.

\bibitem{bhatia07}
R.~Bhatia, Positive {D}efinite {M}atrices, Princeton University Press,
  Princeton, NJ, 2007.

\bibitem{ijp}
B.~Iannazzo, B.~Jeuris, F.~Pompili, The derivative of the matrix geometric mean
  with an application to the nonnegative decomposition of tensor grids, in:
  Structured matrices in numerical linear algebra, Vol.~30 of Springer INdAM
  Ser., Springer, Cham, 2019, pp. 107--128.

\bibitem{cheng}
G.~Cheng, B.~C. Vemuri, A novel dynamic system in the space of {SPD} matrices
  with applications to appearance tracking, SIAM Journal on Imaging Sciences
  6~(1) (2013) 592--615.

\bibitem{lenglet05}
C.~Lenglet, M.~Rousson, R.~Deriche, O.~Faugeras, S.~Lehericy, K.~Ugurbil, A
  {R}iemannian approach to diffusion tensor images segmentation, in: G.~E.
  Christensen, M.~Sonka (Eds.), Information Processing in Medical Imaging,
  Springer Berlin Heidelberg, Berlin, Heidelberg, 2005, pp. 591--602.

\bibitem{hua}
X.~Hua, Y.~Cheng, H.~Wang, Y.~Qin, Y.~Li, Geometric means and medians with
  applications to target detection, IET Signal Processing 11 (2017) 711--720.

\bibitem{drasta}
T.~Dršata, A.~P\'erez, M.~Orozco, A.~V. Morozov, J.~Šponer, F.~Lankaš,
  Structure, stiffness and substates of the {D}ickerson-{D}rew dodecamer,
  Journal of Chemical Theory and Computation 9~(1) (2013) 707--721.

\bibitem{tuzel}
O.~Tuzel, F.~Porikli, P.~Meer, Pedestrian detection via classification on
  {R}iemannian manifolds, IEEE Transactions on Pattern Analysis and Machine
  Intelligence 30~(10) (2008) 1713--1727.

\bibitem{barachant12}
A.~Barachant, S.~Bonnet, M.~Congedo, C.~Jutten, Multiclass brain–computer
  interface classification by {R}iemannian geometry, IEEE Transactions on
  Biomedical Engineering 59~(4) (2012) 920--928.

\bibitem{moakher05}
M.~Moakher, P.~G. Batchelor, The symmetric space of positive definite tensors:
  From geometry to applications and visualization, in: J.~Weickert, H.~Hagen
  (Eds.), Visualization and Processing of Tensor Fields, Mathematics and
  Visualization, Springer, Berlin, 2006, pp. 285--298.

\bibitem{moakher06}
M.~Moakher, On the averaging of symmetric positive-definite tensors, Journal of
  Elasticity 82~(3) (2006) 273--296.

\bibitem{fi}
M.~Fasi, B.~Iannazzo, Computing the weighted geometric mean of two large-scale
  matrices and its inverse times a vector, SIAM Journal on Matrix Analysis and
  Applications 39~(1) (2018) 178--203.

\bibitem{moakher05b}
M.~Moakher, A differential geometric approach to the geometric mean of
  symmetric positive-definite matrices, SIAM J. Matrix Anal. Appl. 26 (2005)
  735--747.

\bibitem{lll}
J.~Lawson, H.~Lee, Y.~Lim, Weighted geometric means, Forum Math. 24~(5) (2012)
  1067--1090.

\bibitem{nt}
Y.~E. Nesterov, M.~J. Todd, On the {R}iemannian geometry defined by
  self-concordant barriers and interior-point methods, Found. Comput. Math.
  2~(4) (2002) 333--361.

\bibitem{Hiai2014}
F.~Hiai, D.~Petz, Matrix means and inequalities, in: Introduction to Matrix
  Analysis and Applications, Springer International Publishing, 2014, Ch.~5,
  pp. 187--226.

\bibitem{ohara}
A.~Ohara, N.~Suda, S.~Amari, Dualistic differential geometry of positive
  definite matrices and its applications to related problems, Linear Algebra
  and its Applications 247 (1996) 31--53.

\bibitem{ohara13}
A.~Ohara, S.~Eguchi, Geometry on positive definite matrices induced from
  {V}-potential function, in: F.~Nielsen, F.~Barbaresco (Eds.), Geometric
  Science of Information, Springer Berlin Heidelberg, Berlin, Heidelberg, 2013,
  pp. 621--629.

\bibitem{tsallis}
C.~Tsallis, What are the numbers that experiments provide?, Quimica Nova 17~(6)
  (1994) 468--471.

\bibitem{box-cox}
G.~E.~P. Box, D.~R. Cox, An analysis of transformations, Journal of the Royal
  Statistical Society, Series B 26~(2) (1964) 211--252.

\bibitem{chen2000}
B.~Chen, Riemannian submanifolds, in: F.~Dillen, L.~Verstraelen (Eds.), Hanbook
  of differential geometry, Vol. I, North Holland, Amesterdam, 2000, pp.
  187--418.

\bibitem{helgason1978}
S.~Helgason, Differential geometry, Lie groups, and symmetric spaces, Academic
  Press, New York, 1978.

\bibitem{kobayashi}
S.~Kobayashi, Transformation groups in differential geometry, Springer Science
  \& Business Media, 2012.

\bibitem{jost}
J.~Jost, Riemannian Geometry and Geometric Analysis, 2nd Edition, Springer,
  Berlin, 1998.

\bibitem{augusti}
G.~Augusti, J.~B. Martin, W.~Prager, On the decomposition of stress and strain
  tensors into spherical and deviatoric parts, Proceedings of the National
  Academy of Sciences of the United States of America 63~(2) (1969) 239--241.

\end{thebibliography}
\end{document}